\newtheorem{theorem}{Theorem}[section]
\newtheorem{prop}[theorem]{Proposition}
\newtheorem{lemma}[theorem]{Lemma}
\newtheorem{exam}[theorem]{Example}
\newtheorem{conj}[theorem]{Conjecture}
\newtheorem{cor}[theorem]{Corollary}
\newtheorem{obs}[theorem]{Observation}
\newtheorem{question}[theorem]{Question}
\newenvironment{pf}{\medskip {Proof:  \hspace*{-.4cm}}
    \enspace}{\hfill \qed \newline \medskip}
\DeclareMathOperator{\CCR}{CCR}
\newcommand{\sqr}{\: \Box\:  }
\DeclareMathOperator{\tw}{tw}
\DeclareMathOperator{\rank}{rank}
\title{The $q$-Analogue of Zero Forcing for Certain Families of Graphs}
\author{Shaun Fallat \footnotemark[1] \footnotemark[2]  \footnotemark[8]
\and
Neha Joshi \footnotemark[1]  \footnotemark[2]
\and
Roghayeh Maleki \footnotemark[7]
\and
Karen Meagher \footnotemark[1] \footnotemark[2]
\and
Seyed Ahmad Mojallal \footnotemark[1] \footnotemark[2]
\and
Shahla Nasserasr \footnotemark[3]
\and
Mahsa N. Shirazi \footnotemark[1] \footnotemark[5]
\and
Andriaherimanana Sarobidy Razafimahatratra \footnotemark[7]
\and
Brett Stevens \footnotemark[4]
}
\date{\today}
\begin{document}

\maketitle

\renewcommand{\thefootnote}{\fnsymbol{footnote}}

\footnotetext[1]{Department of Mathematics and Statistics,
University of Regina, Regina, SK, S4S 0A2. }
\footnotetext[2]{Fallat: \texttt{shaun.fallat@uregina.ca}; Joshi: \texttt{njp008@uregina.ca}; Meagher: \texttt{Karen.Meagher@uregina.ca}; Mollojal: \texttt{ahmad\_mojalal@yahoo.com}.}
\footnotetext[3]{School of Mathematical Sciences, Rochester Institute of Technology, Rochester, NY. 14623. \texttt{sxnsma@rit.edu}}
\footnotetext[4]{School of the Mathematics and Statistics, Carleton University, Ottawa, ON,  K1S 5B6. \texttt{brett@math.carleton.ca}}
\footnotetext[5]{Department of Mathematics, University of Manitoba, Winnipeg, MB, R3T 2N2. \texttt{mahsa.nasrollahi@gmail.com}}
\footnotetext[6]{University of Primorska, UP FAMNIT, Glagolja\v ska 8, 6000 Koper, Slovenia. \texttt{r.maleki910@gmail.com}}
\footnotetext[7]{University of Primorska, UP FAMNIT, Glagolja\v ska 8, 6000 Koper, Slovenia. \texttt{sarobidy@phystech.edu}}
\footnotetext[8]{Corresponding author.}
\renewcommand{\thefootnote}{\arabic{footnote}}

\begin{abstract}
Zero forcing is a combinatorial game played on a graph with the ultimate goal of changing the colour of all the vertices at minimal cost.  Originally this game was conceived as a one player game, but later a two-player version was devised in-conjunction with studies on the inertia of a graph, and has become known as the $q$-analogue of zero forcing. In this paper, we study and compute the $q$-analogue zero forcing number for various families of graphs. We begin with by considering a concept of contraction associated with trees. We then significantly generalize an equation between this $q$-analogue of zero forcing and a corresponding nullity parameter for all threshold graphs. We close by studying the $q$-analogue of zero forcing for certain Kneser graphs, and a variety of cartesian products of structured graphs.

\vspace{.4cm}
\noindent {\em Keywords:} zero forcing, variants of zero forcing, trees, threshold graphs, Kneser graph, cartesian product, maximum nullity.

\vspace{.4cm}
\noindent {\em AMS Subject Classifications:} 05C50, 05C76.
\end{abstract}

\section{Introduction}

Zero forcing, originally conceived as method for bounding the maximum nullity of a graph, is a combinatorial game played on a graph.  This propagation-type game involves colouring the vertices of a graph by applying a sequence of two possible moves or operations on the vertices. The key move is known as the  \emph{colour change rule}. We note that sometimes this rule is also referred to as the forcing rule or the filling rule. The colour change rule (or CCR for short) is defined as: if a coloured vertex has a unique uncoloured neighbour (and any number of coloured neighbours), then this exactly one uncoloured neighbour becomes coloured at no cost.  In the literature, and sometimes here as well, when a coloured vertex changes the colour of unique uncoloured vertex, this is often referred to as a coloured vertex {\em forces} an uncoloured vertex. The game is summarized as follows.

\begin{quote}
{\bf The Zero Forcing Game (or $Z$-game):} All the vertices of the graph $G=(V,E)$ are initially uncoloured and there is one player who has tokens.  The player will repeatedly apply one of the following two operations until all vertices are coloured:
\begin{enumerate}
\item For one token, any vertex can be changed from uncoloured to coloured.
\item At no cost, the player can apply the CCR on $G$.
\end{enumerate}
\end{quote}

The {\em zero forcing number} for the graph $G$, denoted by $Z(G)$, is the minimum number of tokens needed to guarantee that all vertices can be coloured.  Zero forcing was originally developed in the combinatorial matrix theory community to provide a combinatorial bound for the minimum rank (or more specifically the maximum nullity) of a symmetric matrix $A$ associated with a graph \cite{AIM}.  In fact, the CCR above describes when a collection of zero coordinates in a given null vector for  such a matrix $A$, associated to a graph $G$, implies that this null vector is equal to the zero vector. 

Since the work in \cite{AIM} appeared, there have been many variations on the $Z$-game, including positive semidefinite $Z$-game, along with many others corresponding to particular subclasses of matrices that can be associated with a graph (or perhaps a directed graph). Most relevant to this work and the one considered here is a $q$-analogue of zero forcing  which introduces a third operation available to the player (see \cite{BGH}).  This new operation allows the (potential) application of the CCR on an induced subgraph of the original graph, and depends on the choice made by a second player, often referred to as the {\em oracle}.  For any graph $G$, if $X\subseteq V(G)$, then we let $G[X]$ be the induced subgraph of $G$ on the vertices $X$.
\begin{quote}
{\bf The $q$-Analogue of the Zero Forcing Game (or $Z_q$-game):}  For $q\geq 0$ an integer, assume all the vertices of the graph $G=(V,E)$ are initially uncoloured and there is one player who has tokens, and one oracle.  The player will repeatedly apply one of the following three operations until all vertices are coloured.
\begin{enumerate}
\item For one token, any vertex can be changed from uncoloured to coloured.
\item At no cost, the player can apply the CCR on $G$.
\item Let the vertices currently coloured be denoted by $B$, and $W_1,\ldots,W_k$ be the vertex sets of the connected components of $G[V\setminus B]$ (i.e., components of uncoloured vertices).  If $k\ge q+1$, the player selects \emph{at least}\/ $q+1$ of the $W_i$'s and announces the selection to the oracle.  The oracle selects a nonempty subset of these components, $\{W_{i_1},\ldots,W_{i_\ell}\}$, and announces it back to the player.  At no cost, the player can apply the CCR on $G[B\cup W_{i_1}\cup\cdots\cup W_{i_\ell}]$. 
\end{enumerate}
\end{quote}

The {\em $q$-analogue zero forcing number} of a graph, denoted by $Z_q(G)$, is the minimum number of tokens needed to guarantee that all vertices can be coloured, regardless of how the oracle responds.  
For convenience, we may refer to the third option above as the CCR$_q$ for a fixed $q$.

Due to the ``at least'' part in the definition above it follows that  $Z_0(G)\le Z_1(G)\le\cdots\le Z(G)$. We note here that the positive semidefinite zero forcing number (see \cite{peters}), and is usually denoted by $Z_{+}(G)$, is equal to the number $Z_{0}(G)$ as defined above. Further, in \cite{BGH} it was shown that the parameter $Z_q(G)$ also provides an upper-bound related to the maximum nullity over a certain class of matrices associated with $G$.  

\begin{theorem}[\cite{BGH}]
If $A$ is a real-symmetric matrix with nonzero off-diagonal entries corresponding to the edges of a graph $G$ \emph{and} $A$ has at most $q$ negative eigenvalues, then the nullity of $A$ is at most  $Z_q(G)$.
\end{theorem}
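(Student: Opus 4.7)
The plan is to argue by contradiction. Suppose $\nullity(A) > Z_q(G)$, fix a player strategy $\sigma$ that wins against every oracle using $Z_q(G)$ tokens, and denote by $S$ the set of initially tokenised vertices. Since $|S| < \nullity(A)$, the subspace
\[
    \mathcal{S}_S = \{x \in \ker(A) : x|_S = 0\}
\]
is nontrivial. The aim is to play the role of the oracle and maintain the invariant $\mathcal{S}_{B_t} \neq \{0\}$ throughout the game; since $\sigma$ is winning, the game must terminate with $B_t = V$, where $\mathcal{S}_V = \{0\}$, producing the contradiction.

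For Rule~2 (the ordinary CCR) the classical argument suffices: if $u \in B_t$ has unique uncoloured neighbour $v$, expanding $(Ax)_u = 0$ and using $x_w = 0$ for $w \in B_t$ together with $A_{uw} = 0$ for non-neighbours $w$ collapses the sum to $A_{uv}x_v = 0$, and $A_{uv} \neq 0$ since $uv \in E(G)$. So $\mathcal{S}_{B_t \cup \{v\}} = \mathcal{S}_{B_t}$ and no oracle decision is needed.

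The substantive step is Rule~3, and this is where the assumption $n_-(A) \le q$ enters through Cauchy interlacing. Because $A[V \setminus B_t]$ decomposes as the block diagonal sum $\bigoplus_j A[W_j]$ over the components of $G[V \setminus B_t]$ (no edges, and hence no off-diagonals, connect distinct components) and is a principal submatrix of $A$, interlacing yields
\[
    \sum_j n_-(A[W_j]) \le n_-(A) \le q.
\]
Hence at most $q$ of the blocks $A[W_j]$ possess any negative eigenvalue, so among the $\ge q+1$ components the player is forced to select, at least one block $A[W^\star]$ is positive semidefinite. The oracle then announces a nonempty subset containing $W^\star$ (the cleanest choice being $\{W^\star\}$).

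The main obstacle is that the CCR inside $G[B_t \cup W^\star]$ uses only local edges, whereas $(Ax)_u = 0$ for $u \in B_t$ reaches neighbours of $u$ in every uncoloured component, so the "leakage" terms $\sum_{w \notin B_t \cup W^\star} A_{uw} x_w$ could obstruct the deduction $x_v = 0$. To push past this I would prove a PSD decoupling lemma: using the Gram factorisation $A[W^\star] = U^\top U$ and the fact that Gram-columns belonging to vertices in different components of $G[V \setminus B_t]$ are pairwise orthogonal, any nonzero element of $\ker(A[W^\star])$ extends by zero to a nonzero element of $\mathcal{S}_{B_t}$ supported inside $B_t \cup W^\star$. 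For such a vector the leakage vanishes identically, the CCR on the announced subgraph reduces to Rule~2 applied to $A[B_t \cup W^\star]$, and the invariant is preserved. The inertial pigeonhole above is precisely what guarantees a PSD block at every Rule~3 step, and is the reason the game definition demands at least $q+1$ components. The delicate bookkeeping, and the step I expect to demand the most care, is showing that $\mathcal{S}_{B_t}$ survives across successive Rule~3 applications; for this I would phrase the invariant dimensionally, tracking $\dim \mathcal{S}_{B_t}$ and verifying that neither rule strictly decreases it once the oracle returns the PSD component flagged by interlacing.
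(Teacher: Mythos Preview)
The paper does not prove this theorem; it is quoted from \cite{BGH} and no argument is given here, so there is no in-house proof to compare your sketch against. That said, your proposal is worth examining on its own merits.

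Your overall architecture---track a nonzero $x\in\ker(A)$ vanishing on $B_t$, observe that Rule~2 preserves this, and for Rule~3 appeal to interlacing on the block-diagonal submatrix $A[V\setminus B_t]=\bigoplus_j A[W_j]$ to locate a positive semidefinite block among the $\ge q+1$ selected components---matches the shape of the argument in \cite{BGH}. The gap is in your ``PSD decoupling lemma,'' which as stated is false.

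You claim that a nonzero $z\in\ker(A[W^\star])$ extends by zero to an element of $\mathcal{S}_{B_t}\subseteq\ker(A)$. But for $u\in B_t$ with a neighbour in $W^\star$ one has $(A\tilde z)_u=\sum_{w\in W^\star}A_{uw}z_w$, and nothing forces this sum to vanish; the rows of $A$ indexed by $B_t$ are simply not governed by $A[W^\star]$. The Gram-orthogonality you invoke is the mechanism that makes the $q=0$ proof work, but there it comes from factoring the \emph{entire} matrix $A=M^\top M$ and observing $m_v\perp m_w$ for $v,w$ in distinct white components. For $q\ge1$ no such global factorisation exists, and the local factorisation $A[W^\star]=U^\top U$ has columns indexed only by the single component $W^\star$, so the phrase ``Gram-columns belonging to vertices in different components'' has no referent. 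A second, independent problem: positive semidefiniteness of $A[W^\star]$ in no way guarantees $\ker(A[W^\star])\neq\{0\}$, so your lemma may have no input at all.

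The repair is not to manufacture a null vector supported on $W^\star$, but to let the current $x$ dictate the oracle's reply. One correct line (this is essentially what \cite{BGH} does) is: return those selected components on which $x$ is nonzero; if that set is empty, return any one selected component and keep $x$ unchanged. The interlacing bound $\sum_j n_-(A[W_j])\le q$ is used not to single out a PSD block to return, but to control how many components can carry a nontrivial piece of $x$ in the course of the game, which is what ultimately forces the token count up to $\nullity(A)$. Your closing remark about phrasing the invariant dimensionally is exactly the right instinct; the single-vector formulation makes the Rule~1 bookkeeping awkward, since tokens may be spent adaptively and your ``initially tokenised set $S$'' is not well defined in the $Z_q$-game.
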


It is useful to note that for the $Z$-game it is well-known that the spending of tokens can all happen up front before applying the CCR.  Hence, in previous works there is a focus on zero forcing \emph{sets} for the $Z$-game.  In the $Z_q$-game it might be disadvantageous to spend all tokens up front, i.e., the oracle's response(s) may change the optimal spending pattern (see an example given in \cite{BGH}). Consequently, computing $Z_q(G)$ is rather tricky, partly due to the vast array of options at each stage of the game. For example, if we restrict our attention to trees, the maximum nullity of all such symmetric matrices associated to a tree with exactly $q$ negative eigenvalues (denoted by $M_q$) is known and depends on the so-called maximum disconnection number of a tree (see \cite{BHL}). However, beyond some very basic trees (paths, stars, etc.) $Z_q$ for a tree is very complicated to compute (cf. Section 2.1) for $0 < q < n$. It is our purpose here to derive $Z_q$ for a number of families of graphs in an effort to develop more data for both the zero forcing numbers of these graphs and for bounding the corresponding nullities. Such computations will provide some insights to the very important inverse eigenvalue problem for graphs.

In this paper, we begin by deriving a tool, we call a contraction intended to better illuminate the CCR$_{q}$ for the case of trees. 
In Section 3, we consider the class of connected threshold graphs and establish that $M_q=Z_q$ for every $q$ and we derive a formula for this number as well. Finally, in Section 4, we consider a variety of specific families of graphs and compute the $q$-analogue zero forcing number for such graphs. In Section 4, we also state numerous conjectures where there are some gaps in our computations.

\section{A Contraction on Graphs}
Let $G$ be a graph with vertex set $V$. Assume further, that $B$ denotes the subset of coloured vertices and let $W=V\setminus B$.  
We define $C(G)$, the {\em bipartite contraction of $G$}  by contracting edges incident to two coloured vertices. All edges joining two uncoloured vertices are also contracted.  These contractions are repeated until all uncoloured vertices are only connected to coloured vertices and each coloured vertex is connected to at least one uncoloured vertex.  
For such a graph $G$, we use the term {\em coloured} (or {\em uncoloured}) {\em component} of $G$ to mean connected induced subgraphs consisting of only coloured (or uncoloured vertices). Given a graph $G$ with vertices $B$ coloured and $W=V\setminus B$, and $S \subset W$, let $G\{S\}$ be the graph obtained from $G$ that is induced by $S$ along with all coloured components (namely connected induced subgraphs consisting of only coloured vertices) with an edge connected to $S$.


\begin{lemma}\label{lemma_bipart}
Let $G$ be a graph with vertices $B$ coloured and $W=V\setminus B$ and let $C(G)$ be the bipartite contraction of $G$.  If there exists a set of uncoloured vertices $W$ of $C(G)$ such that $C(G)\{S\}$ has a coloured vertex of degree 1 for all $\emptyset \neq S \subseteq W$, then in the game $\CCR_{i}$ for $G$, there is a move which will permit at least one new vertex to be forced  for any $i \leq |W|-1$.
\end{lemma}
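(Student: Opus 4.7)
The plan is to translate the hypothesis, which is phrased entirely inside the bipartite contraction $C(G)$, into a concrete winning move in the $\CCR_i$-game on $G$. Recall that $C(G)$ is a bipartite (multi)graph whose coloured vertices are in bijection with the coloured connected components of $G$ (restricted to those with at least one uncoloured neighbour) and whose uncoloured vertices are in bijection with the uncoloured connected components of $G$. Let $T$ denote the distinguished set of uncoloured vertices of $C(G)$ guaranteed by the hypothesis (I relabel it to avoid the name-clash with $W = V\setminus B$), set $k := |T|$, and let $W_1,\dots,W_k$ be the $k$ uncoloured components of $G$ corresponding to the members of $T$.

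The player's move is the natural one: announce all $k$ components $W_1,\dots,W_k$ to the oracle. Since $i \le k-1$, we have $i+1 \le k$, so this is a legal $\CCR_i$ announcement. The oracle returns some non-empty sub-family $\{W_{i_1},\dots,W_{i_\ell}\}$, corresponding to a non-empty subset $S \subseteq T$. Apply the hypothesis to this $S$: there is a coloured vertex $c$ of $C(G)\{S\}$ of degree $1$. Since $c$ is the image, under the contraction, of a coloured component $B_c \subseteq B$ of $G$, the degree condition says that exactly one edge of $C(G)$ leaves $B_c$ toward the uncoloured supervertices in $S$.

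Tracing that single $C(G)$-edge back through the contraction produces a unique $G$-edge $(v,u)$ with $v \in B_c$ and $u \in W_{i_j}$ for some $j$, with no other edge from $B_c$ reaching $W_{i_1}\cup\cdots\cup W_{i_\ell}$. Consequently, in the induced subgraph $G[B \cup W_{i_1} \cup \cdots \cup W_{i_\ell}]$ on which the CCR is now allowed, the coloured vertex $v$ has exactly one uncoloured neighbour, namely $u$. Hence $v$ forces $u$, giving a new coloured vertex. Because the choice of $S$ was arbitrary among the oracle's possible responses, the player's single announcement forces at least one new vertex no matter how the oracle replies, which is what is claimed.

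The main obstacle I expect is the bookkeeping around the bipartite contraction: one must commit to the multigraph viewpoint (keeping track of edge multiplicities when parallel edges arise under contraction) so that "degree $1$ in $C(G)\{S\}$" really does encode the existence of a \emph{single} $G$-edge from $B_c$ into the chosen uncoloured components, rather than merely a single adjacent supervertex, which would not by itself suffice for a force. Once this is fixed, the proof is essentially an unpacking of definitions, and the ``for all non-empty $S$'' quantifier in the hypothesis is exactly the strength needed to survive every possible oracle response.
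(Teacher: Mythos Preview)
Your argument is correct and mirrors the paper's proof essentially step for step: hand the components corresponding to the distinguished set to the oracle, take the returned family $S$, locate the degree-$1$ coloured supervertex in $C(G)\{S\}$, and pull back to a coloured vertex in $G$ with a unique uncoloured neighbour in the induced subgraph. Your explicit commitment to the multigraph convention for the contraction (so that ``degree $1$'' really records a single $G$-edge, not merely a single adjacent component) is a point the paper only hints at with the phrase ``only one edge joining it to this neighbour,'' and your observation about the name-clash on $W$ is well taken.
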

\begin{proof}
Consider the uncoloured components in $G$ corresponding to $W$ and hand them to the oracle. If the oracle passes back a set $S$ of components, then in $C(G)\{S\}$  there is a coloured vertex $x$ of degree 1. In the subgraph of $G$ induced on the coloured vertices and the uncoloured components corresponding to $S$, there exists a vertex in the coloured component corresponding to $x$ which has exactly one uncoloured neighbour  and only one edge joining it to this neighbour.  This uncoloured neighbour can be forced.
\end{proof}

\begin{theorem}\label{thm_sufficient}
  Let $G$ be a tree with vertices $B$ coloured and $W=V\setminus B$ and suppose $C(G)$ be the bipartite contraction of $G$. If there is a matching of size $q+1$ in $C(G)$, then there is a move which will permit at least one new vertex in $G$ to be forced using the  $\CCR_{q}$.
  \end{theorem}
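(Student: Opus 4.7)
The plan is to invoke Lemma~\ref{lemma_bipart} with $W'$ chosen as the set of uncoloured endpoints of the given matching. Let $M = \{w_i b_i : 1 \le i \le q+1\}$ be a matching of size $q+1$ in $C(G)$, where each $w_i$ is uncoloured and each $b_i$ is coloured. Set $W' = \{w_1, \ldots, w_{q+1}\}$. Since $|W'| = q+1$, the lemma will give the desired conclusion for $\CCR_q$ provided we verify its hypothesis: that $C(G)\{S\}$ has a coloured vertex of degree $1$ for every nonempty $S \subseteq W'$.

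The key structural observation is that when $G$ is a tree, its bipartite contraction $C(G)$ is also a tree, because contracting an edge in a tree yields a tree. Moreover, by construction every edge of $C(G)$ joins a coloured vertex to an uncoloured one, so the coloured components of $C(G)$ are simply singleton vertices. Consequently, for any nonempty $S \subseteq W'$, the graph $C(G)\{S\}$ is the subgraph of $C(G)$ induced by $S$ together with $N(S)$, the set of coloured neighbours of $S$ in $C(G)$; being an induced subgraph of a tree, it is a forest.

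The verification of the lemma's hypothesis is then a short edge-counting argument, which is the main content of the proof. Suppose toward contradiction that every coloured vertex of $C(G)\{S\}$ has degree at least $2$; summing degrees on the coloured side, the forest $C(G)\{S\}$ has at least $2|N(S)|$ edges. On the other hand, since it is a forest on $|S| + |N(S)|$ vertices it has at most $|S| + |N(S)| - 1$ edges, giving $|N(S)| \le |S| - 1$. But the matching $M$ restricted to $S$ injects $S$ into $N(S)$ via $w_i \mapsto b_i$, so $|N(S)| \ge |S|$, a contradiction. Hence some coloured vertex in $C(G)\{S\}$ has degree $1$.

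The subtle point to watch is ensuring that all the $b_i$'s really lie in $N(S)$ in the induced subgraph $C(G)\{S\}$ and that they are distinct; distinctness is immediate because $M$ is a matching, and inclusion follows since the edge $w_i b_i$ persists in $C(G)\{S\}$ whenever $w_i \in S$. With the hypothesis of Lemma~\ref{lemma_bipart} verified for $W = W'$ of size $q+1$, the lemma applied with $i = q$ (which satisfies $i \le |W'|-1$) produces a move in $G$ under $\CCR_q$ that forces a new vertex, completing the proof.
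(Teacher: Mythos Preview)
Your proof is correct and follows essentially the same approach as the paper's: both arguments take $W'$ to be the uncoloured endpoints of the matching, use that $C(G)$ is acyclic (since $G$ is a tree), and derive a contradiction via the same edge count (at least $2|N(S)|$ edges from the degree assumption versus at most $|S|+|N(S)|-1$ edges from acyclicity, against $|N(S)|\ge|S|$ from the matching), then invoke Lemma~\ref{lemma_bipart}. Your write-up is in fact slightly cleaner in making explicit that the coloured components of $C(G)$ are singletons, so that $C(G)\{S\}$ is just the subgraph induced on $S\cup N(S)$.
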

  \begin{proof}
   Assume $G$ is a tree with $B$ and $W$ given as assumed. Then $C(G)$ does not have a cycle. Suppose $M = \{\{w_i,b_i\}, i=1,\ldots,q+1\}$ is a matching of size $q+1$ in $C(G)$ and let $S = \{w_{i_j}, j=1,\ldots,s\} \subset W$.  Assume that the coloured vertices of $C(G)\{S\}$ are given by $\{b_{i_j}, j=1, \ldots, t\}$ with $t \geq s$ because there is a matching saturating $S$.  Since $C(G)\{S\}$ is acyclic, it contains $e \leq  s+t-1$ edges.  Note that every vertex of $B$ has at least one white neighbour. If all coloured vertices have degree at least 2 in $C(G)$, then $e \geq 2t$.  This implies that $t < s$, which is a contradiction.  
Thus there exist at least one coloured vertex of degree one for all $\emptyset \neq S \subseteq W$. Using Lemma \ref{lemma_bipart}, a new vertex can be forced if the oracle returns $S$.
    \end{proof}

For the next result we incorporate Hall's theorem as a tool to aid the proof, and as such if $X \subseteq V$, we let $Nbd(X)$ denote the set of vertices adjacent to at least one vertex in $X$.

\begin{theorem}\label{thm_necessary}
  Let $G$ be a connected graph with vertices $B$ coloured and $W=V\setminus B$, and let $C(G)$ be the bipartite contraction of $G$. If there is a move which will permit at least one new vertex in $C(G)$ to be forced in $\CCR_{q}$ (and not $\CCR$) then there exists a matching of size $q+1$ in $C(G)$.
\end{theorem}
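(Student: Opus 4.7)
The approach is to apply Hall's theorem to an appropriate bipartite subgraph of $C(G)$ built from the player's choice. Let $W' = \{W_1, \dots, W_k\}$, with $k \geq q+1$, denote the collection of uncoloured components the player presents to the oracle in the guaranteed-winning $\CCR_q$ move. Because the move must succeed no matter which nonempty subset $S \subseteq W'$ the oracle returns, the induced subgraph $C(G)\{S\}$ must contain a coloured vertex of degree $1$ for every nonempty $S \subseteq W'$; this is the contracted translation of "some coloured vertex of $G$ has a unique uncoloured neighbour in $G[B \cup \bigcup_{W_i \in S} W_i]$", which is precisely what enables the CCR to apply after the oracle's response (cf.\ the proof of Lemma \ref{lemma_bipart}).

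Let $H$ be the bipartite subgraph of $C(G)$ with vertex classes $W'$ on one side and, on the other, all coloured vertices of $C(G)$ adjacent to at least one $W_i \in W'$. I will show that $H$ satisfies Hall's condition with respect to $W'$, so that Hall's theorem produces a matching saturating $W'$; such a matching has size $|W'| \geq q+1$ and lies inside $C(G)$, yielding the theorem.

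To verify the Hall condition $|N_H(S)| \geq |S|$ for every nonempty $S \subseteq W'$, I argue by induction on $|S|$. The base $|S|=1$ is immediate: $C(G)\{W_i\}$ contains a coloured vertex of degree $1$, so $W_i$ has at least one neighbour in $H$. For $|S| \geq 2$, apply the hypothesis to $S$ to obtain a coloured vertex $b$ of degree $1$ in $C(G)\{S\}$ with its unique neighbour $W_j \in S$. Since $b$ is not adjacent to any element of $S \setminus \{W_j\}$ in $C(G)$, we have $b \in N_H(S) \setminus N_H(S \setminus \{W_j\})$. Combined with the inductive estimate $|N_H(S \setminus \{W_j\})| \geq |S|-1$, this yields $|N_H(S)| \geq |S|$, completing the induction.

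The main obstacle is the translation in the first paragraph, namely certifying a converse to Lemma \ref{lemma_bipart}: that a $\CCR_q$ move which succeeds regardless of the oracle's response is witnessed, at the contracted level, by a degree-$1$ coloured vertex in $C(G)\{S\}$ for every nonempty $S \subseteq W'$. The connectedness of $G$ is used here to ensure the bipartite contraction faithfully records the adjacency between coloured and uncoloured components. Once this characterization is in hand, the remainder of the argument is a clean Hall-style induction and does not require anything further.
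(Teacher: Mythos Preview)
Your proof is correct and follows essentially the same approach as the paper: translate the hypothesis into ``$C(G)\{S\}$ has a degree-$1$ coloured vertex for every nonempty $S\subseteq W'$,'' then verify Hall's condition using that degree-$1$ vertex to separate the neighbourhood of $S$ from that of $S\setminus\{W_j\}$. The only cosmetic difference is that the paper argues by minimal counterexample to Hall's condition while you argue by direct induction on $|S|$; these are the same argument in contrapositive form, and the key step (that $b\in N(S)\setminus N(S\setminus\{W_j\})$, so the neighbourhood drops by at least one) is identical.
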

\begin{proof}
 Suppose that the set of at least $q+1$ uncoloured vertices $W$ in $C(G)$ represents a move in game $\CCR_q$ which allows at least one new vertex to be forced. We show that the graph $C(G)\{W\}$ satisfies Hall's Theorem and thus contains a matching of size $|W|$ saturating the vertices of $W$.  First, since $G$ is connected, every singleton of $W$ has at least one coloured neighbour.  Suppose  $S \subset W$ is the minimal counter example to Hall's theorem, namely $|Nbd(S)| < |S|.$
    If the oracle returns any of the uncoloured components corresponding to $S$, at least one uncoloured vertex can be forced so there must exist a coloured vertex $b \in Nbd(S)$ with exactly one uncoloured neighbour, $w$.  Let $S' = S \setminus \{w\}$ in $C(G)\{S\}$.  Using the minimality of $S$ we have

    \[
      |S'| \leq |Nbd(S')| \leq |Nbd(S)| -1 < |S| -1 = |S'|.
    \]
    Thus for all $S \subset W$,
   $ |Nbd(S)| \geq |S|$
  and there exists a matching saturating $S$ which must have size at least $q+1$.
\end{proof}

%

\section{$Z_{q}(G)$ and $M_q(G)$ of a threshold graph}

Given a graph $G$ on $n$ vertices, we let $S(G)$ denote the collection of all real $n \times n$ symmetric matrices whose off-diagonal entry in position $(i,j)$ is assigned a nonzero number if and only if vertices $i$ and $j$ are adjacent. For this class of symmetric matrices we let $M(G)$ denote the maximum nullity over this class.  On the other hand, given $q$, we may restrict attention to those symmetric matrices governed by the edges of $G$ with exactly $q$ negative eigenvalues (counting multiplicities) and label this set as $S_q(G)$. For such matrices we consider their largest possible nullity and denote this maximum by $M_q (G)$. As noted in the introduction we have $M_q (G) \leq Z_q (G)$ for all $q$.

All threshold graphs are obtained through an iterative process which starts with an isolated vertex, and where, at
each step, either a new isolated vertex is added, or a vertex adjacent to all previous vertices (or dominating
vertex) is added. A vertex in a threshold graph that is adjacent to every other vertex in the graph is called a {\em universal} vertex.

We may represent a threshold graph on $n$ vertices using a binary sequence $(b_1, \ldots, b_n)$. Here $b_i$ is 0 if vertex $v_i$ was added as an isolated vertex, and $b_i$ is 1 if $v_i$ was added as a dominating vertex. This representation has been called a {\em creation sequence} (see \cite{HSS}). For brevity, if $G$ is a threshold graph with creation sequence $(b_1, \ldots, b_n)$ we write $G \cong (b_1, \ldots, b_n)$. We assume 0 is the first character of the string; it represents the first vertex of the graph. This way, the number of characters equal to 1 in this string, called the {\em trace} of the graph, indicates the number of dominating vertices in its construction \cite{Mer}. As we are interested in connected threshold graphs, we always assume that $b_n =1$. 

 For a threshold graph $G$, it is well-known that 
 \begin{equation}\label{well-known on Z}
 Z(G)=M(G)=n-2T+s_1+2s_0,
 \end{equation} where $s_1$ is the number of  $01$ patterns  (only happens for the first two entries of the creation sequence) or $101$ and $s_0$ is the number of $11$  patterns in the creation sequence of $G$ (see \cite{ShuWan}). 
 
 Also for any chordal graph $G$ on $n$ vertices, it is well-known that $Z_0(G)=M_0(G)=n-cc(G)$ (see \cite{barioli2013parameters}), where $cc(G)$ is known as the clique cover number of $G$ and is equal to the fewest number of cliques needed to cover the edges of $G$. We know that a threshold graph is a chordal graph and also each clique in a minimum clique cover includes an isolated vertex. Indeed, for a threshold graph $G$,  $cc(G)$ is the number of zeros in the creation sequence which equals $n-T$. Hence for a threshold graph $G$, we have 
 \begin{equation}Z_0(G)=M_0(G)=T. \label{mo} \end{equation}
 
 In this section, we generalize equations (\ref{well-known on Z}) and (\ref{mo}) and establish $M_q(G)=Z_q(G)$ for all connected threshold graphs $G$ and for all $q$. In addition, we derive the following formula for this common value, which is provided below.
Suppose $G$ is a connected threshold graph with creation sequence given by 
\[
(0^{(k_1)}, 1^{(t_1)}, \ldots,0^{(k_s)}, 1^{(t_s)}):=
(\underbrace{0, \ldots, 0}_{k_1},\underbrace{1, \ldots, 1}_{t_1}, \ldots,  \underbrace{0, \ldots, 0}_{k_s}, \underbrace{1, \ldots, 1}_{t_s}),\]
 with trace $T=\sum_{i=1}^s t_i$ and let $a_j=\max\{k_j-2,\,0\}$ for $j\in \{1,2,\ldots,s\}$. Then for any 
$q\in \{0,1,2,\ldots,s\}$, 
 \begin{equation}\label{z_q}
M_q(G)= Z_q(G)=T+\max_{1 \le i_1<\ldots<i_q \le s} \sum_{j=1}^q a_{i_j}.
 \end{equation}

We begin by verifying that the right-hand side of equality in (\ref{z_q}) is valid as an upper bound for $Z_q$ for any connected  threshold graph. 

\begin{theorem}\label{thm:upperbound on z_q}
 Let $G\cong (0^{(k_1)}, 1^{(t_1)}, \ldots,0^{(k_s)}, 1^{(t_s)})$ be a connected threshold graph with trace $T=\sum_{i=1}^s t_i$ and let $a_j=\max\{k_j-2,\,0\}$ for $j\in \{1,2,\ldots,s\}$. Then for any $q\in \{0,1,2,\ldots,s\}$, 
 \begin{equation}\label{upper on z_q}
 Z_q(G)\le T+\max_{1 \le i_1<\ldots<i_q \le s} \sum_{j=1}^q a_{i_j}.
 \end{equation}
\end{theorem}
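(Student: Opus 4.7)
The plan is to exhibit an adaptive strategy for the player that uses at most $T+\max_{|I|=q}\sum_{j\in I}a_{i_j}$ tokens against any oracle. I spend exactly $T$ tokens in a preparatory phase: for each $j\in\{1,\ldots,s\}$, designate one vertex $v^{(j)}\in A_j$ (which exists because $k_j\geq 1$) and one vertex $b^{(j)}_*\in B_j$, then colour $v^{(j)}$ along with every vertex of $B_j\setminus\{b^{(j)}_*\}$. A chain of classical CCR moves now colours all of $B$ at no further cost: because $v^{(s)}\in A_s$ has $N(v^{(s)})\cap B=B_s$ and $b^{(s)}_*$ is the only uncoloured vertex of $B_s$, $v^{(s)}$ forces $b^{(s)}_*$; with $B_s$ entirely coloured, $v^{(s-1)}$ has $b^{(s-1)}_*$ as its only uncoloured neighbour in $B_{s-1}\cup B_s$ and forces it; iterating down to $j=1$ completes the colouring of $B$. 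What remains uncoloured is exactly $A\setminus\{v^{(1)},\ldots,v^{(s)}\}$, forming $r_j:=k_j-1$ isolated singleton components in each $A_j$.

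Now I reduce the rest of the game to an abstract game on the vector $\vec r=(r_1,\ldots,r_s)$. Since $B$ is fully coloured, each $A_j$ is independent, and every pre-coloured vertex of $A$ has its neighbourhood in the already-coloured $B$, only the vertices of $B$ can generate new forces. A classical CCR force is available iff $r_{j^*}=1$ for $j^*=\min\{j:r_j\geq 1\}$, and then it decrements $r_{j^*}$ from $1$ to $0$. A \emph{safe} $\CCR_q$ move -- the player picks $q+1$ singletons, one from each of $q+1$ distinct blocks in $R:=\{j:r_j\geq 1\}$ -- is available iff $|R|\geq q+1$, and for any non-empty oracle response $S$ the CCR chain inside the induced subgraph successively forces every returned vertex (the minimum-index block of $S$ is always a singleton because $|S\cap A_j|\leq 1$ for all $j$), so in the abstract game such a move decrements $r_j$ by $1$ for every $j\in S$. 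A token decrements any single $r_j\geq 1$ by $1$ at cost $1$.

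Let $\phi_q(\vec r)$ denote the minimum number of tokens needed in this abstract game against an adversarial oracle. I claim
\[
\phi_q(\vec r)\;\leq\;M_q(\vec r)\;:=\;\max_{|I|=q}\sum_{j\in I}\max(r_j-1,0),
\]
and prove it by induction on $\sum_j r_j$. If $r_{j^*}=1$, free CCR reduces $r_{j^*}$ to $0$ while leaving $M_q$ unchanged, since $\max(r_{j^*}-1,0)=0$. If $r_{j^*}\geq 2$ and $|R|\geq q+1$, a safe $\CCR_q$ move is available; whatever the oracle returns, the new vector is coordinate-wise $\leq\vec r$, so $M_q$ is non-increasing, and the induction hypothesis completes the step. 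Finally, if $r_{j^*}\geq 2$ but $|R|\leq q$, spend a token on $A_{j^*}$: because $|R|\leq q$, every non-zero value of $\max(r_j-1,0)$ lies inside every top-$q$ selection, so $M_q$ drops by exactly $1$, and the token together with induction yields the bound. Applying this to $\vec r_0=(k_1-1,\ldots,k_s-1)$ -- where $\max(r_{0,j}-1,0)=a_j$ -- gives $Z_q(G)\leq T+M_q(\vec r_0)=T+\max_{|I|=q}\sum_{j\in I}a_{i_j}$. The main technical obstacle is the monotonicity of $M_q$ under single-coordinate decrement together with the verification that the CCR chain inside the subgraph really does force the oracle's entire returned set; the rest is bookkeeping and the inductive combinatorics of the top-$q$ selection.
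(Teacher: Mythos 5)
Your proof is correct, and its skeleton is the same as the paper's: spend exactly $T$ tokens so that every dominating vertex and one vertex of each isolated block get coloured (the paper distributes $t_i-1$ tokens per dominating block and one per isolated block and then forces the remaining dominating vertices right-to-left, which produces the identical intermediate state as your $v^{(j)}$/$b^{(j)}_*$ placement), and then finish the leftover $k_j-1$ isolated vertices per block using $\CCR_q$ plus at most $\max_{|I|=q}\sum_{j\in I}a_j$ further tokens. Where you genuinely improve on the paper is the second phase: the paper simply asserts that ``the oracle will try to force all isolated vertices until the $q$ largest isolated groups remain'' and then charges $k_{i_j}-2$ tokens to each survivor, which is not literally what an adversarial oracle must do (it may leave a different, partially depleted collection of blocks). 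Your reduction to the vector game on $\vec r$ with the potential $M_q(\vec r)=\max_{|I|=q}\sum_{j\in I}\max(r_j-1,0)$ -- non-increasing under free $\CCR$ and safe $\CCR_q$ moves, and dropping by exactly $1$ per token when $|R|\le q$ -- turns that informal claim into a clean induction on $\sum_j r_j$ and yields the same numerical bound against every oracle. In short: same strategy, but your analysis of the adaptive phase is the rigorous version of what the paper only sketches.
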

\begin{pf}
As stated above, we already know that $Z_0(G)=T$, so we assume $q\geq 1$. To start the $Z_q$-game, we initially assign a total of $T=\sum_{i=1}^s t_i$ tokens as follows: to each group of dominating vertices of size $t_i$, assign $t_i-1$ tokens arbitrarily and to each group of isolated vertices we assign one token arbitrarily. 
Now, applying the CCR we can force the remaining uncoloured dominating vertex in each group of dominating vertices, by starting from the right-most group (namely the $s$th group) in the creation sequence and move leftwards. Suppose that $G_1$ is the induced subgraph obtained from removing all coloured vertices, that is, removing all dominating vertices together with one isolated vertex located in each isolated group. It is clear that by applying the  $\CCR_q$, the oracle will try to force all isolated vertices until $q$ largest isolated groups, called $I_{k_{i_1}-1},\, I_{k_{i_2}-1},\, \ldots,\, I_{k_{i_{q}-1}}$ remain. 

Now if $k_{i_j}\ge 3$, then we consider spending $k_{i_j}-2$ tokens in the corresponding group $I_{i_j}$, otherwise we do not spend any tokens in groups $I_{i_j}$ when $k_{i_j}<3$. In this way, all remaining vertices can be coloured by applying the CCR from the left-most group (or smallest indexed group) in the creation sequence and moving to the right. Hence we have spent an additional 
$$\sum_{j=1}^p (|I_{k_{i_j}-1}|-1)=\sum_{j=1}^p (k_{i_j}-2)=\sum_{j=1}^p a_{i_j}=\sum_{j=1}^q a_{i_j},$$ 
where $p$ is the number of indices $i_j$ for which  $k_{i_j}\ge 3$, 
tokens to complete  $Z_q$-game. Hence,  the total number of tokens spent in this process is given by $$T+\max_{1 \le i_1<\ldots<i_q \le s} \sum_{j=1}^q a_{i_j}.$$
\end{pf}

Having established the desired upper bound, we now turn our attention to the lower bound corresponding to equation (\ref{z_q}). We state this lemma now with an additional condition on a diagonal entry corresponding to a universal vertex in the connected threshold graph $G$. For simplicity if $B$ is an $n \times n$ matrix, we let $\eta(B)$ denote the nullity of $B$ and $B_{i,j}$ denotes the $(i,j)$ entry of $B$. Further, if $w \in \{1,2,\ldots, n\}$, then $B(w)$ denotes the principal submatrix of $B$ obtained by deleting row and column $w$ from $B$.

\begin{lemma}\label{thm:lowerbound on z_q}
 Let $G\cong (0^{(k_1)}, 1^{(t_1)}, \ldots,0^{(k_s)}, 1^{(t_s)})$ with the trace $T=\sum_{i=1}^s t_i$ and let $a_j=\max\{k_j-2,\,0\}$ for $j\in \{1,2,\ldots,s\}$. Then for any $1 \leq q \leq s$, there exists a matrix $B \in S_q(G)$ such that 
 \begin{equation}\label{lower on z_q}
M_q(G) \geq \eta(B) \geq  T+\max_{1 \le i_1<\ldots<i_q \le s} \sum_{j=1}^q a_{i_j},
 \end{equation}
 and for at least one universal vertex $w$ in $G$, $B_{w,w}\neq 0$.
\end{lemma}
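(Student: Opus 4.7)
The plan is to construct an explicit twin-structured matrix $B \in S_q(G)$ that realizes the claimed nullity. Let $I_{i_1}, \dots, I_{i_q}$ be a choice of isolated groups attaining the maximum in the formula (without loss of generality one may take all chosen groups of size $k_{i_j} \ge 2$, since any singleton contributes $a_i = 0$). Define $B$ so that each off-diagonal entry $B_{u,v}$ depends only on the groups $g(u), g(v)$ of its endpoints: for each ordered pair of distinct groups $(g, g')$ pick a scalar $c_{g, g'}$ that is nonzero exactly when those groups are adjacent in $G$, pick a nonzero $c_{D_i, D_i}$ for each dominating group (and set $c_{I_i, I_i} = 0$ for each isolated group), then set $B_{u,v} = c_{g(u), g(v)}$ for $u \ne v$. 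On the diagonal, set $B_{u,u} = 0$ for $u$ in a chosen $I_{i_j}$ with $k_{i_j} \ge 2$, $B_{u,u} = \beta_i \ne 0$ for $u$ in every other $I_i$ (with sign to be determined), and $B_{u,u} = c_{D_i, D_i}$ for $u \in D_i$ so that the diagonal of each $D$-block matches its within-block off-diagonal. Choosing $c_{D_s, D_s} \ne 0$ guarantees $B_{w, w} \ne 0$ for every universal vertex $w \in D_s$, and $B \in S(G)$ by construction.

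Because the vertices within each group are twins in this construction, $B$ admits the decomposition $B = P \widetilde B P^T + \Delta$, where $P$ is the $n \times 2s$ matrix whose columns are the normalized indicator vectors of the $2s$ groups (so $P^T P = I_{2s}$), $\widetilde B$ is a $2s \times 2s$ symmetric quotient matrix with $\widetilde B_{g, g'} = \sqrt{|g||g'|}\, c_{g, g'}$ for $g \ne g'$, $\widetilde B_{D_i, D_i} = t_i c_{D_i, D_i}$ and $\widetilde B_{I_i, I_i} = 0$, and $\Delta$ is diagonal with $\Delta_{u,u} = \beta_i$ on $I_i$ and $\Delta_{u,u} = 0$ on $D_i$. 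Since $\Delta$ is constant on each group, it preserves both $\operatorname{col}(P)$ and its orthogonal complement, so the inertia of $B$ equals the sum of the inertias on these two subspaces. On the orthogonal complement (dimension $n - 2s$) only $\Delta$ acts, contributing $\sum_{j}(k_{i_j} - 1) + (T - s)$ zero eigenvalues along with $\sum_{i \text{ unchosen}}(k_i - 1)$ eigenvalues whose sign is $\operatorname{sgn}(\beta_i)$. On $\operatorname{col}(P)$, $B$ is represented by the $2s \times 2s$ quotient $\widetilde B^{\mathrm{quot}} := P^T B P = \widetilde B + \operatorname{diag}(\beta_i \text{ on } I_i,\, 0 \text{ on } D_i)$.

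A direct accounting shows that to reach total nullity $T + \sum a_{i_j}$ with exactly $q$ negative eigenvalues, $\widetilde B^{\mathrm{quot}}$ is required to have inertia $(s, q, s - q)$, together with all unchosen $\beta_i > 0$ (with minor adjustments when singleton groups are involved, where an appropriately signed $\beta_i$ can compensate inside the quotient). Ordering the groups as $I_1, \dots, I_s, D_1, \dots, D_s$ puts $\widetilde B^{\mathrm{quot}}$ into the block form $\begin{pmatrix} \operatorname{diag}(\beta) & C \\ C^T & M_D \end{pmatrix}$, where $C$ has entries $\sqrt{k_i t_j}\, c_{I_i, D_j}$ for $j \ge i$ (and zero otherwise) and $M_D$ is built from the $c_{D_i, D_j}$ and the $t_i$. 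Taking the $c_{D_i, D_j}$ so that $M_D$ has rank one (i.e., $M_D = \tau \tau^T$ for a suitable vector $\tau$) and applying Schur complement reduces the analysis to an $s \times s$ matrix $\tau \tau^T - C^T \operatorname{diag}(\beta)^{-1} C$ with entries of the form $\sqrt{t_i t_j}\, \rho_{\min(i, j)}$, which can be explicitly diagonalised by an upper-triangular change of basis to yield $\operatorname{diag}(\delta_1, \dots, \delta_s)$ whose signs determine the remaining inertia.

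The main obstacle is the parameter selection: simultaneously forcing $s - q$ of the $\delta_i$'s to vanish (to produce the needed extra zeros in $\widetilde B^{\mathrm{quot}}$), the remaining entries to have the correct signs, and keeping every $c_{g, g'}$ (for adjacent $g, g'$) and every $\beta_i$ (for unchosen $i$) nonzero. I plan to handle this by introducing one tunable cross parameter $c_{I_i, D_j}$ per unchosen group and verifying by a direct calculation that the corresponding $\delta_i$ can be driven to zero while the sign pattern of the remaining entries is preserved, thereby obtaining a valid matrix $B$ with the claimed properties.
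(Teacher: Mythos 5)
Your approach is genuinely different from the paper's: the paper proves this lemma by a double induction (outer induction on $q$, inner induction on the trace $T$), growing the matrix one dominating vertex or one isolated block at a time via bordering constructions whose rank is controlled by Cauchy interlacing, with explicit base cases of the form $\pm(MM^T-I)$; the condition $B_{w,w}\neq 0$ at a universal vertex is exactly what makes each bordering step go through. You instead attempt a one-shot construction of a group-constant (``twin-structured'') matrix and reduce the inertia computation to a $2s\times 2s$ quotient via the equitable-partition decomposition $B=P\widetilde{B}P^T+\Delta$. Your bookkeeping up to that reduction is correct: the orthogonal complement of $\operatorname{col}(P)$ contributes $(T-s)+\sum_j(k_{i_j}-1)$ zeros and $\sum_{\text{unchosen}}(k_i-1)$ eigenvalues of sign $\operatorname{sgn}(\beta_i)$, so the quotient must have inertia $(s,q,s-q)$.

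However, the proposal has a genuine gap precisely at its mathematical core: the existence of parameters $c_{g,g'},\beta_i,\tau$ realizing inertia $(s,q,s-q)$ for $\widetilde{B}^{\mathrm{quot}}$ subject to all the nonvanishing constraints is never established --- you state it as something you ``plan to handle'' by ``a direct calculation,'' but that calculation is the entire content of the lemma, and it is not routine (one must simultaneously force $s-q$ exact cancellations $\delta_i=0$, control the signs of the remaining $\delta_i$, and keep every adjacency-required entry nonzero). Moreover, the reduction as described is not even well-posed: you form the Schur complement $\tau\tau^T-C^T\operatorname{diag}(\beta)^{-1}C$, but by your own construction $\beta_{i_j}=0$ on each chosen group $I_{i_j}$, so $\operatorname{diag}(\beta)$ is singular and this Schur complement does not exist; the chosen rows of the $I$-block must be treated separately (e.g.\ by a congruence that first clears the zero-diagonal rows against the $C$-block), and the ``minor adjustments'' for singleton groups $k_{i_j}=1$ (where the complement contributes nothing and the quotient must supply extra nullity) are likewise waved at rather than carried out. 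Until the quotient's inertia is actually exhibited --- for instance by an explicit triangular congruence with verified pivot signs --- the proof is incomplete.
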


We will defer the formal inductive proof of the above lemma as its verification will involve another result stated below (see Lemma \ref{ts=1}). However, we do note that Lemma \ref{thm:lowerbound on z_q} does hold for $q=0$, since we already know that  for any threshold graph $G$, $Z_0(G)=M_0(G)=T$, and positive semidefinite matrices must have positive main diagonal entries corresponding to nonzero rows.

Before we proceed, we need an auxiliary result requiring that Lemma \ref{thm:lowerbound on z_q} hold for certain values of $q$ by induction. This result is  concerned with the nullity of matrices associated with a particular family of connected threshold graphs 

\begin{lemma}
Let $G\cong (0^{(k_1)}, 1^{(t_1)}, \ldots,0^{(k_s)},1^{(1)})$ be a connected threshold graph with $n$ vertices and trace $T$. Assume that Lemma \ref{thm:lowerbound on z_q} holds for all $q$ such that $q<q_0$, where $q_0\geq 1$. If $a_s$ is among the $q_0$ maximum elements from the collection $\{a_1, a_2, \ldots, a_s\}$, then there exists a matrix $B \in S_{q_0}(G)$ such that the nullity of $B$ is at least 
\[T+\max_{1 \le i_1<\ldots<i_{q_{0}-1} \le s-1} \sum_{j=1}^{q_{0}-1} a_{i_j} + a_s,\] and if $w$ is the universal vertex of $G$, then we have $B_{w,w}\neq 0$.
 \label{ts=1}
 \end{lemma}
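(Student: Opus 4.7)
The plan is to induct by peeling off the last $k_s + 1$ vertices. Let $H$ be the connected threshold graph with creation sequence $(0^{(k_1)}, 1^{(t_1)}, \ldots, 0^{(k_{s-1})}, 1^{(t_{s-1})})$, so $G$ arises from $H$ by adding the $k_s$ vertices $x_1, \ldots, x_{k_s}$ (isolated at their time of addition, hence pendants of $w$ in $G$) and then the unique universal vertex $w$. Since $q_0 - 1 < q_0$, the induction hypothesis supplies $C \in S_{q_0-1}(H)$ with
\[
\eta(C) \,\geq\, (T-1) + A, \qquad A := \max_{1 \leq i_1 < \cdots < i_{q_0-1} \leq s-1}\, \sum_{j=1}^{q_0-1} a_{i_j}.
\]
I would then extend $C$ via the bordering
\[
B = \begin{pmatrix} C & 0 & v \\ 0 & D & u \\ v^T & u^T & d \end{pmatrix},
\]
where $D = \mathrm{diag}(e_1, \ldots, e_{k_s})$, and $u \in \mathbb{R}^{k_s}$, $v \in \mathbb{R}^{|V(H)|}$ have all entries nonzero to realize the edges incident to $w$.

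When $k_s \geq 2$ I would take $D = 0$ and any $d \neq 0$. Solving $Bx = 0$ directly: the $x_i$-rows force $x_w = 0$ (since each $u_i \neq 0$), the $H$-rows then give $x_H \in \ker C$, and the $w$-row contributes the single nontrivial constraint $v^T x_H + u^T x_X = 0$; hence $\eta(B) = \eta(C) + k_s - 1 \geq T + A + a_s$, using $a_s = k_s - 2$ here. To verify that $B$ has exactly $q_0$ negative eigenvalues I would deform along $B_t$ obtained by replacing $v$ with $tv$; the same computation shows $\eta(B_t) = \eta(C) + k_s - 1$ for every $t \in [0,1]$, so no eigenvalue of $B_t$ crosses zero and the inertia is invariant along the path. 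At $t = 0$ the matrix decomposes as $B_0 = C \oplus \left(\begin{smallmatrix} 0 & u \\ u^T & d \end{smallmatrix}\right)$, and the $(k_s+1) \times (k_s+1)$ lower-right block has characteristic polynomial $(-\lambda)^{k_s-1}(\lambda^2 - d\lambda - \|u\|^2)$ and inertia $(1, 1, k_s-1)$, so $B$ has $(q_0-1) + 1 = q_0$ negative eigenvalues.

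The main obstacle is the case $k_s = 1$ (so $a_s = 0$), where the naive extension with $D = 0$ only yields $\eta(B) = \eta(C)$. Here I would use that since $H$ is connected every column of $C$ is nonzero, so $\mathrm{Range}(C) = (\ker C)^{\perp}$ is not contained in any coordinate hyperplane and therefore contains a vector $v$ with all entries nonzero. Fix such a $v$, pick $x_0$ with $C x_0 = -v$, and set $\beta := v^T x_0$ (well-defined modulo $\ker C$ since $v \perp \ker C$). Take $e_1 < 0$, $u_1 \neq 0$, and $d := u_1^2/e_1 - \beta$, perturbing $u_1$ slightly if necessary to ensure $d \neq 0$. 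A case analysis of $Bx = 0$ (using $x_{X,1} = -u_1 x_w / e_1$ from the $x_1$-row) gives $\eta(B) = \eta(C) + 1 \geq T + A = T + A + a_s$. For the inertia I would deform $B_t$ by simultaneously replacing $v$ with $tv$ and $d$ with $d_t := u_1^2/e_1 - t^2 \beta$; the same analysis maintains $\eta(B_t) = \eta(C) + 1$ throughout, and at $t = 0$ the $2 \times 2$ block $\left(\begin{smallmatrix} e_1 & u_1 \\ u_1 & u_1^2/e_1 \end{smallmatrix}\right)$ has zero determinant and negative trace (since $e_1 < 0$), hence inertia $(0, 1, 1)$; therefore $B \in S_{q_0}(G)$.

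The base case $s = 1$, in which $H$ is empty and $\beta$ is vacuously zero, falls out of the same formulas. In every case $B_{w,w} = d \neq 0$ by construction, satisfying the final requirement of the lemma.
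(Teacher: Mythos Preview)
Your argument is correct and follows the same overall architecture as the paper: peel off the last $k_s+1$ vertices, apply the inductive hypothesis to $H$ at level $q_0-1$ to obtain $C\in S_{q_0-1}(H)$, and border to build $B\in S_{q_0}(G)$. For $k_s\ge 2$ your matrix (with $D=0$) is exactly the paper's, and the nullity computation $\eta(B)=\eta(C)+k_s-1$ agrees. The only methodological difference is how the inertia is certified: the paper invokes Cauchy interlacing between $B$ and $B(w)$ (whose principal submatrix $C\oplus 0_{k_s}$ has $q_0-1$ negative eigenvalues, and the known nullity of $B$ forces exactly one new eigenvalue on each side of zero), whereas you use a homotopy $B_t$ with constant nullity so no eigenvalue can cross zero. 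Both are valid; the interlacing route is shorter, while your homotopy is more self-contained.

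Your treatment of $k_s=1$ is genuinely more careful than the paper's. The paper's displayed chain $\eta(B)\ge T+\ldots+(k_s-2)=T+\ldots+a_s$ tacitly uses $a_s=k_s-2$, which fails when $k_s=1$. Your fix (choosing $v\in\operatorname{Range}(C)$ with no zero entries, tuning $d$ so the $w$-row becomes redundant, and taking $e_1<0$ to manufacture the extra negative eigenvalue) recovers the missing unit of nullity and is correct. In the paper's actual application of this lemma the case $a_s=0$ is in fact already covered by the other construction in Lemma~\ref{thm:lowerbound on z_q}, so the omission is harmless there; but as a proof of the lemma \emph{as stated}, your version is the complete one.
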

 
 \begin{pf}
 Let $H\cong (0^{(k_1)}, 1^{(t_1)}, \ldots,0^{(k_{s-1})}, 1^{(t_{s-1})})$. Then, by the assumption applied to Lemma \ref{thm:lowerbound on z_q},  $\exists B_1\in S_{q_0 -1}(H)$ such that, $$\eta(B_1)\ge T(H)+\max_{1 \le i_1<\ldots<i_{q_{0}-1} \le s-1} \sum_{j=1}^{q_{0}-1} a_{i_j}=T-1+\max_{1 \le i_1<\ldots<i_{q_{0}-1} \le s-1} \sum_{j=1}^{q_{0}-1} a_{i_j},$$ and with a universal vertex $w$ satisfying ${B_1}_{w,w}\neq 0$. Now, we consider a new matrix $B$ as follows:
 $$B= \left(\begin{array}{@{}cccc|c|c@{}}
     &  &   &   &\overbrace{0 ~~\ldots ~~ 0}^{k_s} & b_1 \\
     &  & B_1~~~~  &   &\vdots &\vdots  \\
     &  &   &   & 0~~\ldots ~~ 0 &\vdots  \\
     &  &   &   & 0 ~~\ldots ~~0 & b_k \\\hline
    0 & \ldots & 0  & 0& 0 ~~~~~~~~~~~~ & b_{k+1} \\
    \vdots & \ldots & \vdots  & \vdots & \ddots & \vdots \\
   0 & \ldots & 0  & 0& ~~~~~~~~~~~~0 & b_{n-1} \\\hline
    b_1 & \ldots & \ldots  & b_k & b_{k+1}~\ldots~b_{n-1}  & c
  \end{array}\right),$$ where $c, b_i\neq 0$ for $1\le i \le n-1$. We have  
  $\rank(B)=\rank(B_1)+2$, that is, $\eta(B)=\eta(B_1)+k_s-1$. This combined with Cauchy's interlacing theorem (see \cite{HJ1}) applied to $B$ and $B(w)$, gives $B\in S_{q_0}(G)$. Moreover, from the above we have 
 \begin{eqnarray*}
 \eta(B) & \ge & T-1+\max_{1 \le i_1<\ldots<i_{q_{0}-1} \le s-1} \sum_{j=1}^{q_{0}-1} a_{i_j}+k_s-1 \\
 &= & T+\max_{1 \le i_1<\ldots<i_{q_{0}-1} \le s-1} \sum_{j=1}^{q_{0}-1} a_{i_j}+k_s-2 \\
 &=& 
 T+\max_{1 \le i_1<\ldots<i_{q_{0}-1} \le s-1} \sum_{j=1}^{q_{0}-1} a_{i_j}+a_s, \end{eqnarray*}
  and $B_{w,w}=c\neq 0$.  \end{pf}

We now complete the proof of Lemma \ref{thm:lowerbound on z_q}.

\noindent {\bf Proof of Lemma \ref{thm:lowerbound on z_q}:} 
 The proof of this result will use induction on $q$ and has been verified for $q=0$. So, consider the case in which $q=1$. To establish the desired inequality for $q=1$, we use induction on the trace $T$ (observe that $q \leq s \leq T$). 
\vspace{3mm}

(Base of Induction) If $T=1~(=q)$, then  $G\cong (0^{(n-1)},1^{(1)})$. Considering the $n \times (n-1)$ matrix $M$ given by
$$M=\left(
  \begin{array}{cccc}
    1 &   1  & \ldots & 1  \\
    1 &   0    & \ldots & 0  \\
    0 &   1    & \ddots &  \vdots  \\
    \vdots &  \ddots    & \ddots &  0  \\
    0 &   0    & \ldots &  1 \\
  \end{array}
\right).$$ Then, $M^T\,M=I_{n-1}+J_{n-1}$ and its eigenvalues are denoted by $\sigma(M^T\,M)=\{n,\, 1^{(n-2)}\}$ and consequently, $\sigma(M\,M^T)=\{n,\, 1^{(n-2)},0\}$. Set $A=M\,M^T-I_n$. Then we have $A\in S_1(G)$ with $\eta(A)=n-2=T+a_1$ and $A_{w,w}=n-2$. 

\vspace{3mm}

(Induction Hypothesis) Assume that for any connected threshold graph $G$ with trace less than $T$ equation (\ref{lower on z_q}) is true. 
 
\vspace{3mm} 
 
(Induction Step) We prove that equation (\ref{lower on z_q}) is true for any connected  graph $G\cong (0^{(k_1)}, 1^{(t_1)}, \ldots,0^{(k_s)}, 1^{(t_s)})$ with trace  $T$. We consider the following two cases:

\vspace{3mm} 

$Case\, 1)$ $t_s\ge 2$. Let $H\cong (0^{(k_1)}, 1^{(t_1)}, \ldots,0^{(k_s)}, 1^{(t_{s}-1)})$. Using the induction hypothesis we have 
 $\exists B\in S_1(H)$ such that
 \begin{equation}\label{z_q1}
\eta(B)\ge T-1+\max_{1\le i \le s}a_i.
 \end{equation}
  and at least one universal vertex $w$ satisfies $B_{w,w}\neq 0$. Then, matrix $B$ has the following form:
  
   $$B= \left(\begin{array}{@{}ccc|c@{}}
    * & \ldots & *& b_1  \\
    \vdots & \ddots & \vdots & \vdots  \\
    * & \ldots & * & b_{n-2}  \\\hline
    b_1 & \ldots & b_{n-2} & b_{n-1}  \\
  \end{array}\right),$$  where the last row and column correspond to the vertex $w$, which gives,  $b_i\neq 0$ for $i\in \{1,2,\ldots, n-1\}$. Now, using the matrix $B$, we define the matrix $B^{\prime}$ as follows: 
 $$B^{\prime}= \left(\begin{array}{@{}ccc|c|c@{}}
    * & \ldots & *& b_1 & b_1 \\
    \vdots & \ddots & \vdots & \vdots & \vdots \\
    * & \ldots & * & b_{n-2} & b_{n-2} \\\hline
    b_1 & \ldots & b_{n-2} & b_{n-1} & b_{n-1} \\\hline
    b_1 & \ldots & b_{n-2} & b_{n-1} & b_{n-1} 
  \end{array}\right).$$

Obviously, $\rank(B^{\prime})=\rank(B)$. This combined with Cauchy's interlacing theorem for $B$ and $B^{\prime}$ gives $B^{\prime}\in S_1(G)$ and
 \begin{equation}\nonumber
\eta(B^{\prime})=\eta(B)+1\ge T+\max_{1\le i \le s}a_i.
 \end{equation}
 Moreover, corresponding to the last universal vertex $w^{\prime}$ in $G$, we have $B^{\prime}_{w^{\prime}, w^{\prime}}=b_{n-1}\neq 0$. This completes the induction in this case. 

\vspace{3mm} 

$Case\, 2)$ $t_s=1$. Let $H\cong (0^{(k_1)}, 1^{(t_1)}, \ldots,0^{(k_{s-1})}, 1^{(t_{s-1})})$. By the induction hypothesis we have 
 $\exists B\in S_1(H)$ such that
 \begin{equation}\label{z_q1-2}
\eta(B)\ge T-1+\max_{1\le i \le s-1}a_i.
 \end{equation}
  and at least one universal vertex $w$ satisfies $B_{w,w}\neq 0$. Then matrix $B$ has the following form:
  
   $$B= \left(\begin{array}{@{}ccc|c@{}}
    * & \ldots & *& b_1  \\
    \vdots & \ddots & \vdots & \vdots  \\
    * & \ldots & * & b_{k-1}  \\\hline
    b_1 & \ldots & b_{k-1} & b_{k}  \\
  \end{array}\right),$$  where $k=n-1-k_s$ and the last row and column correspond to the vertex $w$, which gives,  $b_i\neq 0$ for $i\in \{1,2,\ldots, k\}$. Now, using the matrix $B$, define the matrix $B^{\prime}$ as follows: 
 $$B^{\prime}= \left(\begin{array}{@{}ccc|c|c|c@{}}
    * & \ldots & *& b_1 &\overbrace{0 ~\ldots ~ 0}^{k_s} & b_1 \\
    \vdots & \ddots & \vdots & \vdots &\vdots & \vdots \\
    * & \ldots & * & b_{k-1}  & 0~ \ldots ~ 0 &b_{k-1} \\\hline
    b_1 & \ldots & b_{k-1}  & b_k& 0 ~\ldots ~0 & b_k \\\hline
    0 & \ldots & 0  & 0& a ~~~~~~~~~ & a \\
    \vdots & \ldots & \vdots  & \vdots & \ddots & \vdots \\
   0 & \ldots & 0  & 0& ~~~~~~~~~~a & a \\\hline
    b_1 & \ldots & b_{k-1} & b_k & a~\ldots~a  & c
  \end{array}\right),$$ where $c=b_k+k_s\,a$. We choose a positive value for $a$ such that   $a\neq \frac{-b_k}{k_s}$. 

Obviously, $\rank(B^{\prime})=\rank(B)+k_s$. This combined with Cauchy's interlacing theorem for $B^{\prime}$ and $B^{\prime}(n)$ gives $B^{\prime}\in S_1(G)$ and
 \begin{equation}\nonumber
\eta(B^{\prime})=\eta(B)+1\ge T+\max_{1\le i \le s-1}a_i.
 \end{equation} 
 At this stage, if $a_s$ is not maximum among the $a_i$'s, then we are done. If this is not that case, we may use Lemma \ref{ts=1} (note that the desired lower bounds holds for $q=0$) to complete the proof in this case, so
 \begin{equation}\label{z_q3}
 \eta(B^{\prime})\ge T+\max_{1\le i \le s}a_i.
 \end{equation}
 
 Moreover, corresponding to the last (universal) vertex $w^{\prime}$ in $G$, we have $B^{\prime}_{w^{\prime}, w^{\prime}}=c\neq 0$. This completes the induction on $T$ when $q=1$ and hence verifies the base case for the first induction on $q$. Now, assume that Lemma \ref{thm:lowerbound on z_q} holds for all $q <q_0$.
 Let $G\cong (0^{(k_1)}, 1^{(t_1)}, \ldots,0^{(k_s)}, 1^{(t_s)})$ with trace $T=\sum_{i=1}^s t_i$ and let $a_j=\max\{k_j-2,\,0\}$ for $j\in \{1,2,\ldots,s\}$, and $T \geq s \geq q_0$. To verify the desired inequality for $q_0$, we will use induction on $T$ with $q_0$ fixed.
 
 Base case: $T=q_0=s$. Let $G\cong (0^{(k_1)}, 1^{(t_1)}, \ldots,0^{(k_s)}, 1^{(1)})$. Consider the matrix $M$ given by
  $$M=\left(\begin{array}{@{}c|c|c|c@{}}
    {\cal J}_1 & {\cal J}_2 &\ldots & {\cal J}_{s} \\\hline
    I_{k_1} & O & \ldots & O \\\hline
    O  &  I_{k_2} & \ddots & \vdots \\\hline
    \vdots & \ddots &\ddots & O\\\hline
        O & \ldots &O & I_{k_s}
  \end{array}\right),$$ where ${\cal J}_i$ is an $s \times k_i$ matrix obtained from all ones matrix of the same order but with all $i-1$ first rows equal to zero, and $O$ is a rectangular zero matrix  with a proper order. 


Then $$M^T\,M=\left( \begin{array}{cccc}
    q_0 J_{k_1}+I_{k_1} & (q_0-1)J_{k_1,k_2} & \cdots & J_{k_1,k_s} \\[2mm]
    (q_0-1)J_{k_2,k_1}  & (q_0-1)J_{k_2}+I_{k_2} & \cdots &  J_{k_2,k_s}  \\
    \vdots & \ddots & \ddots & \vdots \\
     J_{k_s,k_1}  & J_{k_s,k_2} & \cdots & J_{k_s}+I_{k_s}
  \end{array}
\right),$$ 
where $J_{k,l}$ is the $k \times l$ matrix of all ones.

It is not difficult to show that there exists an invertible matrix $S$ such that  $S(M^T\,M-I)S^T$ is equal to the positive semidefinite matrix given by
$$P=\left( \begin{array}{cccc}
    q_0 J_{k_1} & 0 & \cdots & 0 \\[2mm]
    0  & \frac{(q_0-1)}{q_0}J_{k_2} & \cdots &  0  \\
    \vdots & \ddots & \ddots & \vdots \\
     0  & 0 & \cdots & \frac{q_0-s+1}{q_0-s+2} J_{k_s}
  \end{array}
\right).$$ 
Observe that the rank of $P$ is $s$. Since $S$ is invertible (or we can apply the classical Sylvester's Law of inertia), it follows that the
spectrum of $M^T\,M$ is $\sigma(M^T\,M)=\{1^{(\sum k_i-s)}, \lambda_1, \lambda_2, \ldots, \lambda_s\}$ , where $\lambda_1, \lambda_2, \ldots, \lambda_s$ are all more than 1.
Define the matrix $A=-(M\,M^T-I)$. We have $A\in S_{q_0}(G)$ with $\eta(A)=k_1+k_2+\cdots + k_s-s=T+a_1+a_2+\cdots + a_s$ and  $A_{w,w}\neq 0$. 
\vspace{3mm}

Now, assume that Lemma \ref{thm:lowerbound on z_q} holds for fixed $q_0$ and for all such connected threshold graphs $G$ with trace $T < T_0$. 

Let $G\cong (0^{(k_1)}, 1^{(t_1)}, \ldots,0^{(k_s)}, 1^{(t_s)})$ with trace $T_0 \geq q_0$. The remainder of the proof relies on two cases.

\vspace{3mm} 

$Case\, 1)$ $t_s\ge 2$. Let $H\cong (0^{(k_1)}, 1^{(t_1)}, \ldots,0^{(k_s)}, 1^{(t_{s}-1)})$. By the induction hypothesis on $T_0 -1$ we know 
 $\exists B\in S_{q_0}(H)$ such that,
 \begin{equation}\label{}
 \eta(B)\ge T_0-1+\max_{1 \le i_1<\ldots<i_{q_0} \le s} \sum_{j=1}^{q_0} a_{i_j},
 \end{equation}
  and at least one universal vertex $w$ satisfies $B_{ww}\neq 0$. Then, matrix $B$ has the following form:
  
   $$B= \left(\begin{array}{@{}ccc|c@{}}
    * & \ldots & *& b_1  \\
    \vdots & \ddots & \vdots & \vdots  \\
    * & \ldots & * & b_{n-2}  \\\hline
    b_1 & \ldots & b_{n-2} & b_{n-1}  \\
  \end{array}\right),$$  where the last row and column correspond to the vertex $w$, which gives,  $b_i\neq 0$ for $i\in \{1,2,\ldots, n-1\}$. Now, using the matrix $B$, define $B^{\prime}$ as follows: 
 $$B^{\prime}= \left(\begin{array}{@{}ccc|c|c@{}}
    * & \ldots & *& b_1 & b_1 \\
    \vdots & \ddots & \vdots & \vdots & \vdots \\
    * & \ldots & * & b_{n-2} & b_{n-2} \\\hline
    b_1 & \ldots & b_{n-2} & b_{n-1} & b_{n-1} \\\hline
    b_1 & \ldots & b_{n-2} & b_{n-1} & b_{n-1} 
  \end{array}\right).$$

Obviously, $\rank(B^{\prime})=\rank(B)$. This combined with Cauchy's interlacing theorem for $B$ and $B^{\prime}$ gives $B^{\prime}\in S_{q_0}(G)$ and
 \begin{equation}\nonumber
\eta(B^{\prime})=\eta(B)+1\ge T_0+\max_{1 \le i_1<\ldots<i_{q_0} \le s} \sum_{j=1}^{q_0} a_{i_j}.
 \end{equation}
 Moreover, corresponding to the last (universal) vertex $w^{\prime}$ in $G$, we have $B^{\prime}_{w^{\prime}, w^{\prime}}=b_{n-1}\neq 0$. This completes the induction in this case. 

\vspace{3mm} 

$Case\, 2)$ $t_s=1$. Let $H\cong (0^{(k_1)}, 1^{(t_1)}, \ldots,0^{(k_{s-1})}, 1^{(t_{s-1})})$. By the induction hypothesis on $T_0 -1$, we know 
 $\exists B\in S_{q_0}(H)$ such that,
 \begin{equation}\label{}
\eta(B)\ge T_0-1+\max_{1 \le i_1<\ldots<i_{q_0} \le s-1} \sum_{j=1}^{q_0} a_{i_j},
 \end{equation}
  and at least one universal vertex $w$ satisfies $B_{w,w}\neq 0$. Then, matrix $B$ has the following form:
  
   $$B= \left(\begin{array}{@{}ccc|c@{}}
    * & \ldots & *& b_1  \\
    \vdots & \ddots & \vdots & \vdots  \\
    * & \ldots & * & b_{k-1}  \\\hline
    b_1 & \ldots & b_{k-1} & b_{k}  \\
  \end{array}\right),$$  where $k=n-1-k_s$ and the last row and column correspond to the vertex $w$, which gives,  $b_i\neq 0$ for $i\in \{1,2,\ldots, k\}$. Now, using the matrix $B$,  define the matrix $B^{\prime}$ as follows: 
 $$B^{\prime}= \left(\begin{array}{@{}ccc|c|c|c@{}}
    * & \ldots & *& b_1 &\overbrace{0 ~\ldots ~ 0}^{k_s} & b_1 \\
    \vdots & \ddots & \vdots & \vdots &\vdots & \vdots \\
    * & \ldots & * & b_{k-1}  & 0~ \ldots ~ 0 &b_{k-1} \\\hline
    b_1 & \ldots & b_{k-1}  & b_k& 0 ~\ldots ~0 & b_k \\\hline
    0 & \ldots & 0  & 0& a ~~~~~~~~~ & a \\
    \vdots & \ldots & \vdots  & \vdots & \ddots & \vdots \\
   0 & \ldots & 0  & 0& ~~~~~~~~~~a & a \\\hline
    b_1 & \ldots & b_{k-1} & b_k & a~\ldots~a  & c
  \end{array}\right),$$ where $c=b_k+k_s\,a$. We choose a positive value for $a$ such that   $a\neq \frac{-b_k}{k_s}$. 

Obviously, $\rank(B^{\prime})=\rank(B)+k_s$. This combined with Cauchy's interlacing theorem for $B^{\prime}$ and $B^{\prime}(n)$ gives $B^{\prime}\in S_{q_0}(G)$ and
 \begin{equation}\nonumber
\eta(B^{\prime})=\eta(B)+1\ge T_0+\max_{1 \le i_1<\ldots<i_{q_0} \le s-1} \sum_{j=1}^{q_0} a_{i_j} 
 \end{equation} 
 At this stage, if $a_s$ is not at least the $q_0^{th}$ maximum among the $a_i$'s, then we are done. If this is not that case, we may use Lemma \ref{ts=1} (note that our conjecture holds for all $q<q_0$ by the inductive hypothesis) to complete the proof in this case.
 
 Hence, we have verified the existence of a $B' \in S_{q_0}(G)$ such that 
 \begin{equation}\label{}
 \eta(B^{\prime})\ge T+\max_{1 \le i_1<\ldots<i_{q_0} \le s} \sum_{j=1}^{q_0} a_{i_j}.
 \end{equation}

This completes the inductive step with respect to $T=T_0$. Therefore, Lemma \ref{thm:lowerbound on z_q} holds for all $T \geq q_0$. Consequently, we have completed the inductive step on $q$ for $q=q_0$. Hence we have completed the proof of Lemma \ref{thm:lowerbound on z_q} for all $1 \leq q \leq s$.  
\newline \hspace*{13cm} $\qed$

\begin{theorem}\label{m_q=z_q}
 Let $G\cong (0^{(k_1)}, 1^{(t_1)}, \ldots,0^{(k_s)}, 1^{(t_s)})$ be a connected threshold graph on $n$ vertices with trace $T=\sum_{i=1}^s t_i$ and let $a_j=\max\{k_j-2,\,0\}$ for $j\in \{1,2,\ldots,s\}$. Then for any $q\in \{0,1,\ldots,s\}$, 
 \begin{equation}\label{m-z_q}
 M_q(G)=Z_q(G)= T+\max_{1 \le i_1<\ldots<i_q \le s} \sum_{j=1}^q a_{i_j}.
 \end{equation}
 \end{theorem}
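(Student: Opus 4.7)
The plan is to observe that Theorem \ref{m_q=z_q} is almost immediate at this point: the two heavy technical ingredients, namely the upper bound on $Z_q(G)$ and the lower bound on $M_q(G)$, have already been established in this section, and they happen to coincide. So the strategy is simply to sandwich $M_q(G)$ and $Z_q(G)$ between the same quantity and invoke the universal inequality $M_q(G) \le Z_q(G)$ noted at the beginning of Section 3 (cf.\ the theorem from \cite{BGH} stated in the introduction).

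Concretely, I would first dispatch the case $q=0$ by citing equation (\ref{mo}), which gives $M_0(G) = Z_0(G) = T$, and this matches the right-hand side of (\ref{m-z_q}) since the empty sum is zero. For $q \ge 1$, I would invoke Theorem \ref{thm:upperbound on z_q} to obtain
\[ Z_q(G) \le T + \max_{1 \le i_1 < \ldots < i_q \le s} \sum_{j=1}^q a_{i_j}, \]
and Lemma \ref{thm:lowerbound on z_q} to obtain a matrix $B \in S_q(G)$ whose nullity satisfies
\[ M_q(G) \ge \eta(B) \ge T + \max_{1 \le i_1 < \ldots < i_q \le s} \sum_{j=1}^q a_{i_j}. \]

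Combining these two bounds with the general inequality $M_q(G) \le Z_q(G)$ yields the chain
\[ T + \max_{1 \le i_1 < \ldots < i_q \le s} \sum_{j=1}^q a_{i_j} \le M_q(G) \le Z_q(G) \le T + \max_{1 \le i_1 < \ldots < i_q \le s} \sum_{j=1}^q a_{i_j}, \]
which forces equality throughout and establishes (\ref{m-z_q}). There is really no obstacle to address here; the main obstacle in the whole program was already overcome in Lemma \ref{thm:lowerbound on z_q}, where the nullity-preserving matrix extensions together with the side condition on the universal diagonal entry had to be maintained through a double induction on $q$ and $T$. Once that lemma is in hand, the proof of Theorem \ref{m_q=z_q} is purely a matter of chaining inequalities, so the presentation should be kept short and emphasize that the equality $M_q = Z_q$ is what is new (as opposed to each inequality separately), since this identifies the $q$-analogue zero forcing number with the genuine graph-theoretic maximum nullity in $S_q(G)$ for the entire family of connected threshold graphs.
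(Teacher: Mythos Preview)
Your proposal is correct and matches the paper's proof essentially verbatim: the paper simply chains Theorem~\ref{thm:upperbound on z_q} and Lemma~\ref{thm:lowerbound on z_q} together with $M_q(G)\le Z_q(G)$ to obtain the displayed sandwich inequality, exactly as you describe. Your explicit treatment of the case $q=0$ via equation~(\ref{mo}) is a minor elaboration but entirely in line with the paper's remarks.
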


\begin{pf}
Using Theorem \ref{thm:upperbound on z_q} and Lemma \ref{thm:lowerbound on z_q}, we have 
\[ T+\max_{1 \le i_1<\ldots<i_q \le s} \sum_{j=1}^q a_{i_j} \leq M_q(G) \leq Z_q(G) \leq T+\max_{1 \le i_1<\ldots<i_q \le s} \sum_{j=1}^q a_{i_j}.\]
\end{pf}

 Obviously, the maximum value for $Z_q(G)$ in (\ref{z_q}) occurs when $q=s$. Indeed, when $q=s$, we have $$Z(G)=Z_s(G)=T+\sum_{j=1}^s a_{i}.$$ Assume that $p=|\{j\,:\, j\in \{1,2,\ldots, s\},\,k_j\ge 2\}|$ and let $k_{i_j}\ge 2$ for $j\in \{1,2,\ldots,p\}$. Then we have 
\begin{align}
Z(G)= T+\sum_{j=1}^s a_{i}=T+\sum_{j=1}^p (k_{i_j}-2)=& T+\sum_{j=1}^p k_{i_j}-2p \nonumber\\
 =& T+(\#\, of\, zeros) -(s-p)-2p \nonumber\\
 =& T+n-T-p-s=n-p-s,  \nonumber
\end{align} where $n$ is the number of vertices in $G$.

 From the well-known formula given in (\ref{well-known on Z}) we have for any connected threshold graph $G$
 \begin{align}
 Z(G)=M(G)= n-2T+s_1+2s_0=&~ n-2T+(s-p)+2\,\sum_{j=1}^s (t_j-1)\nonumber\\
                =&~ n-2T+s-p+2T-2s=n-s-p.
 \end{align} 
We summarize the above remarks with the following result.
\begin{cor}
 Let $G\cong (0^{(k_1)}, 1^{(t_1)}, \ldots,0^{(k_s)}, 1^{(t_s)})$ be a connected threshold graph with $n$ vertices and let $p=|\{j\,:\, j\in \{1,2,\ldots,s\},\,k_j\ge 2\}|$. Then 
 $$Z(G)=M(G)=n-s-p.$$ In particular, the minimum rank of a connected threshold graph is equal to $s+p$.
\end{cor}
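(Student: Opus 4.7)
The plan is to derive this corollary as a direct specialization of Theorem \ref{m_q=z_q} at $q = s$, followed by an elementary counting identity.

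The first step I would take is to identify $Z(G)$ with $Z_s(G)$ and $M(G)$ with $M_s(G)$. Since the creation sequence of $G$ has exactly $s$ zero-blocks, during any play the number of connected components of uncoloured vertices never exceeds $s$; consequently the CCR$_q$ rule cannot be invoked once $q \geq s$ (the selection-size condition $k \geq q+1 > s$ cannot be met), so $Z_q(G) = Z(G)$ for all $q \geq s$. Combined with the monotonicity $Z_0 \leq Z_1 \leq \cdots \leq Z$ recorded in the introduction, this pins $Z(G) = Z_s(G)$. For $M(G)$ I would use the chain
\[
M(G) \leq Z(G) = Z_s(G) = M_s(G) \leq M(G),
\]
whose middle equality is Theorem \ref{m_q=z_q} and whose final inequality is simply $S_s(G) \subseteq S(G)$. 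This gives $M(G) = Z(G) = T + \sum_{j=1}^s a_j$.

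The second step is the algebraic simplification $T + \sum_{j=1}^s a_j = n - s - p$. Let $P = \{j : k_j \geq 2\}$, so $|P| = p$ and the remaining $s - p$ indices satisfy $k_j = 1$. Using the identity $\sum_{j=1}^s k_j = n - T$ (the total number of zeros in the creation sequence), one splits the sum:
\[
\sum_{j=1}^s a_j = \sum_{j \in P}(k_j - 2) = \Bigl(\sum_{j \in P} k_j\Bigr) - 2p = \bigl((n-T) - (s-p)\bigr) - 2p = n - T - s - p.
\]
Adding $T$ yields $Z(G) = M(G) = n - s - p$, and the ``in particular'' statement follows at once from the standard fact that the minimum rank of $G$ equals $n - M(G) = s + p$.

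I expect no serious obstacle; the corollary is essentially bookkeeping that is already implicit in the paragraph preceding its statement. The only subtle point is the justification that $Z(G) = Z_s(G)$ (rather than simply appealing to the formula in Theorem \ref{m_q=z_q} for $q = s$), which the argument above handles by combinatorial inspection of the game. As an independent sanity check, one may alternatively substitute $s_1 = s - p$ and $s_0 = T - s$ into the classical formula (\ref{well-known on Z}); both derivations collapse to the same identity $n - s - p$, which cross-validates Theorem \ref{m_q=z_q} at its maximum value of $q$.
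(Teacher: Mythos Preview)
Your primary argument contains a genuine error. The claim that ``the number of connected components of uncoloured vertices never exceeds $s$'' is false: take $G=K_{1,3}$ with creation sequence $(0,0,0,1)$, so $s=1$. Colouring the universal vertex leaves three isolated uncoloured leaves, i.e.\ three components. More generally, once all dominating vertices are coloured the uncoloured set is an independent set of size $\sum_j k_j=n-T$, which typically exceeds $s$. Hence the $\CCR_s$ rule \emph{can} be invoked, and your combinatorial justification of $Z(G)=Z_s(G)$ collapses. This breaks the chain $M(G)\le Z(G)=Z_s(G)=M_s(G)\le M(G)$ at the equality you need.

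The fix is already in your write-up: what you call a ``sanity check'' is in fact the paper's argument and is a complete proof on its own. The paper computes $Z_s(G)=T+\sum_j a_j=n-s-p$ from Theorem~\ref{m_q=z_q}, then independently computes $Z(G)=M(G)=n-2T+s_1+2s_0=n-s-p$ from the known formula (\ref{well-known on Z}) via the substitutions $s_1=s-p$ and $s_0=\sum_j(t_j-1)=T-s$. The two answers agree, which a posteriori yields $Z_s(G)=Z(G)$ (using $Z_s\le Z$), but the corollary itself is carried entirely by the second computation. Promote that to your main argument and drop the component-count claim.
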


\section{Computing $Z_{q}(G)$ for Certain Structured Families of Graphs}

In this section, we incorporate existing work on both the zero forcing number and the positive semidefinite zero forcing number in an effort to extend these results to the $q$-analogue of zero forcing for certain families of graphs for $q$ at least one. Many of the results in this section will rely on computing $Z_0$, and we note here the well-known fact that for any graph $G$, $Z_0(G) \geq\kappa(G)$, where 
$\kappa(G)$ is the vertex connectivity of the graph $G$, that is the fewest numbers to be deleted to disconnect $G$ (see, for example, \cite{barioli2013parameters}).

%
	\subsection{Cartesian product}
	For any graph $G$, consider the Cartesian product $G\sqr K_2$. If $V(G) = \{1,2,\ldots,n\}$ and $V(K_2) = \{a,b\}$, then label the vertices of the layers of $G\sqr K_2$ by $1a,2a,\ldots,na$ and $1b,2b,\ldots,nb$.
	We begin this section by considering the following graphs: $K_n \sqr K_2$, the {\em ladder graph} $P_n \sqr K_2$, and {\em prism} $C_n \sqr K_2$.
	\begin{prop} Suppose $n\in \mathbb{N}$.
	\begin{enumerate}
	\item Then $Z_0(K_n \sqr K_2) =  Z_1(K_n \sqr K_2) =\ldots =Z(K_n \sqr K_2) = n$.
	\item For any $n\geq 3$, $Z_0(P_n \sqr K_2) = Z_1(P_n \sqr K_2) = \ldots = Z(P_n \sqr K_2) = 2$.
	\item For any $n\geq 3$, we have $Z_0(C_n \sqr K_2) = Z_1(C_n \sqr K_2) = \ldots = Z(C_n \sqr K_2) = 4$.
	\end{enumerate}
	\end{prop}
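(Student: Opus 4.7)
The plan is to establish each of the three parts by exhibiting both an explicit classical zero forcing set of the claimed size (yielding an upper bound on $Z$) and a matching lower bound on $Z_0$, after which the monotone chain $Z_0(G)\le Z_1(G)\le\cdots\le Z(G)$ immediately forces equality throughout.

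For the upper bounds I would write down explicit initial colourings together with their propagation under the ordinary CCR. For part (1), colour the entire $a$-layer $\{1a,\ldots,na\}$ at a cost of $n$ tokens; each $ia$ then has $ib$ as its unique uncoloured neighbour and forces it. For part (2), colour $\{1a,1b\}$; the vertex $1a$ has only $2a$ uncoloured among its two neighbours and forces $2a$, symmetrically $1b$ forces $2b$, and the propagation then runs along the ladder. For part (3), colour $\{1a,2a,1b,2b\}$; the vertex $2a$ has neighbours $\{1a,3a,2b\}$ with only $3a$ uncoloured and so forces $3a$, and $2b$ similarly forces $3b$, after which the forcing continues around the prism.

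For the lower bounds in parts (1) and (2) I would invoke the inequality $Z_0(G)\ge\kappa(G)$ already cited in the paper, together with the standard connectivity bound $\kappa(G\sqr H)\ge\min(\kappa(G)+\delta(H),\kappa(H)+\delta(G))$ for Cartesian products. This produces $\kappa(K_n\sqr K_2)\ge\min((n-1)+1,1+n)=n$ and $\kappa(P_n\sqr K_2)\ge 2$ for $n\ge 3$, which are the needed bounds on $Z_0$.

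The main obstacle is the lower bound in part (3), because the connectivity bound yields only $\kappa(C_n\sqr K_2)\ge 3$, one short of the claimed $Z_0\ge 4$. I would address this by a direct combinatorial argument: assume for contradiction that $B$ is a $Z_0$-forcing set with $|B|=3$, and use the automorphism group generated by cyclic rotations of $C_n$, reflections, and the layer swap of $K_2$ to reduce to a short list of canonical initial configurations, organised by whether the three tokens distribute as $(3,0)$ or $(2,1)$ between the two layers and by the cyclic distance between same-layer tokens. The key observation is that since $C_n\sqr K_2$ is $3$-regular, a coloured vertex can only initiate a force when two of its three neighbours are already coloured, which sharply restricts the possible propagations from a three-element set; in each canonical case I would perform every force available in $G$ and then verify that no union of uncoloured components the oracle can hand back admits any further CCR force. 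A more concise alternative is to invoke the known value $Z_+(C_n\sqr K_2)=4$ from the positive semidefinite zero forcing literature, noting that $Z_0=Z_+$.
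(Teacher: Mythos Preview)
Your proposal is correct and a bit more hands-on than the paper's own argument. The paper's proof is entirely citation-based: it simply quotes $Z(G)$ from \cite{AIM} and $Z_0(G)=Z_+(G)$ from \cite{peters} for each of the three product graphs and then squeezes using the chain $Z_0\le Z_1\le\cdots\le Z$. You instead give explicit zero forcing sets for the upper bounds and use the vertex-connectivity inequality $Z_0(G)\ge\kappa(G)$ (already stated in the paper) together with a product connectivity bound for the lower bounds in parts (1) and (2); this is more self-contained, though you should be careful that the product formula you quote is not the most standard one---the simpler well-known bound $\kappa(G\sqr H)\ge\kappa(G)+\kappa(H)$ already gives the values $n$ and $2$ you need. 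For part (3) you correctly diagnose that $\kappa(C_n\sqr K_2)=3$ falls one short, and your ``concise alternative'' of invoking $Z_+(C_n\sqr K_2)=4$ from the literature is exactly what the paper does; the sketched case analysis you offer as a first option is plausible but not carried out, so in the end the two arguments coincide at the only point where real work is required.
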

	\begin{proof}
	For each of the graphs listed above we make use of previous work to squeeze all of the numbers $Z_q$. For the graphs in item (1), observe that it 
is know from \cite{AIM} that $Z(K_n \sqr K_2)=n$, and from \cite{peters} we also have $Z_0 ( K_n \sqr K_2)=n$, which verifies the equalities in (1). Similar arguments apply to the remaining items using the results from \cite{AIM, peters}.	 
	\end{proof}

	For any $n\geq 4$, we define the {\em book graph} $B_n$  as the graph $K_{1,n} \sqr K_2$.
	\begin{theorem}\label{book}
		For any $n\geq 3$, we have
		\begin{align*}
			Z_q(B_n) = \left\{
			\begin{aligned}
				2 & \mbox{ if } q = 0\\
				n & \mbox{ for }q \geq 1.
			\end{aligned}
			\right. 
		\end{align*}
	\end{theorem}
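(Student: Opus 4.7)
I would split the argument according to the two regimes in the statement. For every $q\ge 0$, the monotonicity $Z_q(B_n)\le Z(B_n)$ reduces the upper bound $Z_q(B_n)\le n$ to showing $Z(B_n)\le n$, which follows from colouring the $n$-element set $\{1a,2a,\ldots,na\}$: the spine vertex $ca$ then has $cb$ as its unique uncoloured neighbour and forces it, after which each $ia$ forces its $K_2$-partner $ib$. For $q=0$ the sharper bound $Z_0(B_n)\le 2$ comes from initially colouring the spine $\{ca,cb\}$; the uncoloured components are then exactly the $n$ pages $\{ia,ib\}$, and because $q+1=1$ the player may announce a single page at a time, compelling the oracle to return it, after which CCR on $G[\{ca,cb,ia,ib\}]$ colours both $ia$ and $ib$.

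The matching lower bound for $q=0$ is immediate from $Z_0(G)\ge\kappa(G)$, recorded at the start of Section~4: $B_n$ is $2$-connected and $\{ca,cb\}$ is a vertex cut, so $\kappa(B_n)=2$. For $q\ge 1$ it suffices to prove $Z_1(B_n)\ge n$, since $Z_q\ge Z_1$. My plan is to invoke $Z_1(G)\ge M_1(G)$ (Theorem~1.1 with $q=1$) and exhibit an explicit $A\in S_1(B_n)$ of nullity $n$.

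The construction exploits the $K_2$-Cartesian product structure. Indexing the vertices layer-by-layer, I seek
\[
A = \begin{pmatrix} A_\star & D \\ D & A_\star \end{pmatrix},
\]
where $A_\star\in S(K_{1,n})$ has the star pattern and $D$ is the diagonal matrix recording the cross-layer $K_2$ edges. Conjugating by the orthogonal matrix $P=\tfrac{1}{\sqrt 2}\bigl(\begin{smallmatrix}I&I\\I&-I\end{smallmatrix}\bigr)$ block-diagonalizes $A$ into $(A_\star+D)\oplus(A_\star-D)$, so both the nullity and the inertia of $A$ split across two arrowhead matrices of order $n+1$. Setting the spoke off-diagonals of $A_\star$ equal to $1$, its leaf-diagonal entries equal to $1$, its centre-diagonal equal to $\tfrac{n+2}{2}$, and $D=\operatorname{diag}(1,-1,\ldots,-1)$: the block $A_\star+D$ has a zero leaf-diagonal, hence rank $2$, nullity $n-1$, and inertia $(1,1,n-1)$; the block $A_\star-D$ is a positive semidefinite arrowhead matrix whose determinant formula $d_1^{n-1}(d_0d_1-n)$ vanishes precisely at the chosen centre value, giving nullity $1$ with remaining nonzero spectrum $\{n/2+2,\,2^{(n-1)}\}$, hence inertia $(n,0,1)$. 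Summing, $A$ has inertia $(n+1,\,1,\,n)$ and nullity exactly $n$, so $A\in S_1(B_n)$ and $M_1(B_n)\ge n$, closing the sandwich.

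The main obstacle will be the simultaneous fine-tuning of the diagonal parameters so that both blocks collapse to their maximum nullity while the overall negative-eigenvalue count remains exactly one. Once one notices that zeroing the leaf-diagonal of $A_\star+D$ forces its rank down to $2$ (which symmetrically locks the leaf-diagonal of $A_\star-D$ at $2$), the arrowhead determinant equation pins the centre entry uniquely, and the positive semidefiniteness of $A_\star-D$ then reduces to a $2\times 2$ spectral computation on the span of the centre vector and the all-ones leaf vector.
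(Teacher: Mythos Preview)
Your argument has one concrete slip in the upper bound $Z(B_n)\le n$: the set $\{1a,\ldots,na\}$ you colour is \emph{not} a zero forcing set. The centre $ca$ is not in this set, so it is uncoloured and cannot force $cb$; meanwhile each coloured leaf $ia$ has two uncoloured neighbours, $ca$ and $ib$, so no CCR step is available and the process stalls at once. The repair is trivial---trade one leaf for the centre and colour $\{ca,1a,\ldots,(n-1)a\}$, which is what the paper does---but the forcing chain you wrote down simply does not start.

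Everything else is correct, and your matrix witness for $M_1(B_n)\ge n$ is a genuinely different construction from the paper's. The paper takes the adjacency matrix $B$ of $B_n$, subtracts a matrix $D$ of the form $\bigl(\begin{smallmatrix}\alpha&\beta\\\beta&\alpha\end{smallmatrix}\bigr)\otimes I$ that commutes with $B$, and then reads off the spectrum of $B-D$ eigenvector by eigenvector from the known eigendecomposition of $B$. You instead exploit the $K_2$-layer structure directly: conjugating by $\tfrac{1}{\sqrt2}\bigl(\begin{smallmatrix}I&I\\I&-I\end{smallmatrix}\bigr)$ block-diagonalises any matrix of the shape $\bigl(\begin{smallmatrix}A_\star&D\\D&A_\star\end{smallmatrix}\bigr)$ into $(A_\star+D)\oplus(A_\star-D)$, reducing the problem to tuning two arrowhead matrices of order $n+1$. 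Your route makes the mechanism more visible---an arrowhead with zero shaft-diagonal has rank~$2$, which immediately gives the $n-1$ kernel vectors in one block, and the inertia of each block is a $2\times 2$ calculation---whereas the paper's approach delivers the full spectrum of the witness matrix without any separate inertia bookkeeping. Both land on a matrix in $S_1(B_n)$ of nullity exactly $n$.
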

	\begin{proof}
		Label the leaves of the layer $aK_{1,n}$ by $1a,2a,\ldots,na$ and the central vertex by $ca$. Similarly, label the leaves of the layer $bK_{1,n}$ by $1b,2b,\ldots,nb$ and the central vertex by $cb$. For the $Z_0$-game, place two tokens on $\{ca,cb\}$. We note that $B_n[V(B_n)\setminus \{ca,cb\}]$ is a disjoint union of $n$ edges. We give one of these edges in each round. No matter which edge is returned by the oracle, we are able to use the CCR to force both end points of the edge using the central vertices $ca$ and $cb$. The graph induced by the uncoloured vertices is again a disjoint union of edges and we repeat the previous strategy.  Hence, $Z_0(B_n) \leq 2$. It is obvious that we  need to spend at least two tokens to win the $Z_0$-game since $\kappa(B_n)>1$. Therefore, $Z_0(B_n) = 2$.
		
		Now we consider the $Z$-game. Place $n$ tokens on all vertices of the layer $aK_{1,n}$ except one leaf. Using the CCR, we are able to colour all vertices of $B_n$. Therefore, $Z_1(B_n) \leq Z(B_n)\leq n$.
		
		On the other hand, we can also prove that $Z_1(B_n) \geq n$, which is not trivial. To show this, we find a weighted adjacency matrix $C\in S_1(B_n)$ such that $\operatorname{rank}(C) = n+2$. Hence, we will have $Z_1(B_n) \geq M_1(B_n) \geq n$. 
		
		The adjacency matrix $B$ of $B_n$ is the matrix
		\begin{align*}
			B =\begin{bmatrix}
				A & I_n\\
				I_n & A
			\end{bmatrix},
		\end{align*}
		where $A$ is the adjacency matrix of $K_{1,n}$. The spectrums of $A$ and $B$ are 
		\begin{align*}
			\sigma(A) &= \left\{ \sqrt{n}^{(1)},-\sqrt{n}^{(1)},0^{(n-1)} \right\},\\
			\sigma(B) &= \left\{ \sqrt{n}+1^{(1)}, - \sqrt{n}+1^{(1)}, 1^{(n-1)}, \sqrt{n}-1^{(1)}, -\sqrt{n}-1^{(1)}, -1^{(n-1)} \right\}. 
		\end{align*}
		For any $2\leq i \leq n$, let $u_i = e_i-e_{i+1}$. The vectors $(u_i)_{i = 2,3,\ldots,n}$ form a basis of the eigenspace (the nullspace) of $A$. The eigenspace corresponding to the eigenvalue $\sqrt{n}$ is spanned by the vector $v_1 = \left( 1, \frac{\sqrt{n-1}}{n-1}, \frac{\sqrt{n-1}}{n-1},\ldots, \frac{\sqrt{n-1}}{n-1} \right)^T$. Since $K_{1,n}$ is bipartite, the generator of the eigenspace of $-\sqrt{n}$ can be easily obtained from $v_1$. We denote this vector by $v_2$. Let $U = \begin{bmatrix}
		1 \\
		1
		\end{bmatrix}$ and $V = \begin{bmatrix}
		1\\
		-1
		\end{bmatrix}$. The eigenvectors of $B$ are of the form
		\begin{align*}
			U
			\otimes 
			x \mbox{ and }
			V
			\otimes 
			x,
		\end{align*}
		for any $x\in \{v_1,v_2,u_2,\ldots,u_n\}$, and where $\otimes$ denotes the tensor product of matrices.
		
		Consider the matrix 
		\begin{align*}
			D = 
			\begin{bmatrix}
				-\frac{\sqrt{n}}{2} I_n & \left( \frac{-\sqrt{n}}{2} +1\right) I_n\\
				\left( -\frac{\sqrt{n}}{2} +1\right) I_n & -\frac{\sqrt{n}}{2} I_n
			\end{bmatrix} = 
			\begin{bmatrix}
			-\frac{\sqrt{n}}{2}  & \left( -\frac{\sqrt{n}}{2} +1\right) \\
			\left( -\frac{\sqrt{n}}{2} +1\right) & -\frac{\sqrt{n}}{2}
			\end{bmatrix} \otimes I_n,
		\end{align*}
		 and let $C = B - D$.	We prove $C\in S_1(B_n)$ and has nullity equal to $n$. 
		 
		 From the structure of $B$, the addition of the matrix $D$ does not annihilate the non-zero off diagonal entries of $-B$. Moreover, the zero entries of $B$ do not change. Hence, $C \in S(B_n)$. 
		 
		 Observe that the matrix $D$ commutes with $B$. Hence $D$ and $B$ are simultaneously diagonalizable. Moreover, 
		 \begin{align*}
		 	D (U\otimes v_1) &= (-\sqrt{n} +1)U\otimes v_1,\\
		 	D (U\otimes v_2) &= (-\sqrt{n} +1)U\otimes v_2,\\
		 	D (U \otimes u_i) &= (-\sqrt{n} +1)(U \otimes u_i), \mbox{ for any } 2\leq i\leq n\\
		 	D (V\otimes v_1) &= -V\otimes v_1,\\
		 	D (V\otimes v_2) &= -V\otimes v_2,\\
		 	D (V \otimes u_i) &= -(V \otimes u_i), \mbox{ for any } 2\leq i\leq n.
		 \end{align*}
		 The spectrum of $D$ is $\sigma(D) = \left\{ -1^{(n+1)}, -\sqrt{n}+1^{(n+1)} \right\}$. We conclude that $-1 - (-1) = 0$ is an eigenvalue of $C$ with eigenvector $V\otimes u_i$, for $2\leq i \leq n$. Similarly, $-\sqrt{n}+1 - \left(-\sqrt{n} + 1\right) = 0$ is an eigenvalue of $C$ with eigenvector $U\otimes v_2$. Upon a careful analysis of the corresponding eigenvectors, we conclude that the spectrum of $C$ is $$\sigma(C) = \left\{ 2\sqrt{n}^{(1)},0^{(n)}, \sqrt{n}^{(n)}, -\sqrt{n}^{(1)} \right\}.$$
		 
		Therefore, the nullity of $C$ is $n$ and $C$ has exactly one negative eigenvalue, which is $-\sqrt{n}$. Hence, $C\in S_1(B_n)$. Therefore, $M_1(B_n) \geq n$. Consequently, $Z_1(B_n)\geq n$. This completes the proof. 
	\end{proof}
	
	\subsection{Complete bipartite graphs}
The complete bipartite graph is denoted as $K_{n,m}$, for some $n,m\geq 2$, and assume the vertices of $K_{n,m}$ are partitioned as $V_n \cup V_m$, where $|V_n |=n$ and $|V_m |=m$. Since $(n,m) \neq (1,1)$, the adjacency matrix associated with $K_{n,m}$ has $3$ distinct eigenvalues. In fact, if $A$ is the adjacency matrix for $K_{n,m}$, we have 
	\begin{align*}
		\sigma(A) = \left\{ \sqrt{nm}^{(1)}, -\sqrt{nm}^{(1)}, 0^{(n+m-2)} \right\}.
	\end{align*}
	
	\begin{prop}
		For any $n,m\geq 1$
		\begin{align*}
			Z_q(K_{n,m}) &= 
			\left\{ 
			\begin{aligned}
			&\min(n,m)  &\mbox{ if } q = 0\\
			&n+m-2 &\mbox{ for } q\geq 1.
			\end{aligned}
			\right. 
		\end{align*}
		
	\end{prop}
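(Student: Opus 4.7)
The proposition splits naturally into the case $q = 0$ and the cases $q \geq 1$; both will be handled by matching upper and lower bounds, and throughout I assume without loss of generality that $n \leq m$.

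For $q = 0$, the lower bound $Z_0(K_{n,m}) \geq \kappa(K_{n,m}) = n$ is immediate from the inequality $Z_0 \geq \kappa$ recalled at the start of Section~4. For the upper bound I would place $n$ tokens on every vertex of the smaller part $V_n$. Since $V_m$ is an independent set, the connected components of the uncoloured induced subgraph $K_{n,m}[V_m]$ are the $m$ singletons $\{v\}$ with $v \in V_m$. A $\CCR_0$ move only requires at least $q+1 = 1$ component, so the player can hand one singleton $\{v\}$ at a time to the oracle, who must return $\{v\}$ itself; inside $G[V_n \cup \{v\}]$, every vertex of $V_n$ has $v$ as its unique uncoloured neighbour, so the CCR colours $v$. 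Iterating across $V_m$ colours the whole graph, giving $Z_0(K_{n,m}) \leq n$.

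For $q \geq 1$, the upper bound $Z_q(K_{n,m}) \leq Z(K_{n,m}) \leq n+m-2$ comes from the following token assignment: place a token on every vertex except one chosen $u \in V_n$ and one chosen $v \in V_m$. Any coloured vertex of $V_m$ then has $u$ as its unique uncoloured neighbour, so the CCR colours $u$; after this, any coloured vertex of $V_n$ has $v$ as its unique uncoloured neighbour, so the CCR colours $v$. For the matching lower bound I would use the spectrum of the adjacency matrix $A$ of $K_{n,m}$ recorded just before the proposition, $\sigma(A) = \{\sqrt{nm}^{(1)},\,-\sqrt{nm}^{(1)},\,0^{(n+m-2)}\}$: the matrix $A$ has exactly one negative eigenvalue and nullity $n+m-2$, so $A \in S_1(K_{n,m})$ and $M_1(K_{n,m}) \geq n+m-2$. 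Invoking the inequality $M_q \leq Z_q$ from the introduction yields $Z_1(K_{n,m}) \geq n+m-2$, and the monotonicity chain $Z_1 \leq Z_q \leq Z$ pins down $Z_q(K_{n,m}) = n+m-2$ for every $q \geq 1$.

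I do not anticipate a real obstacle here. The only point requiring a careful sentence is that, in the $q = 0$ strategy, presenting a single singleton component to the oracle is a legal $\CCR_0$ move (since the rule demands at least $q+1=1$ component) and that the standard CCR subsequently fires; both are immediate from the bipartite structure of $K_{n,m}$. On the matrix side, the claim $A \in S_1(K_{n,m})$ is a one-line consequence of the spectrum already displayed in the paragraph preceding the proposition.
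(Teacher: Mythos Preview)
Your proof is correct and matches the paper's approach almost exactly: for $q\geq 1$ both you and the paper use the adjacency-matrix spectrum to get $M_1(K_{n,m})\geq n+m-2$ and the explicit $(n-1)+(m-1)$ zero-forcing set for the upper bound. The only difference is at $q=0$, where the paper simply cites \cite{peters} for $Z_0(K_{n,m})=\min(n,m)$, while you give a self-contained argument via $Z_0\geq\kappa$ and an explicit $\CCR_0$ strategy on $V_n$; your version is slightly more informative but not a different route.
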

	\begin{proof}
	From \cite[Prop. 3.4]{peters} we have  $Z_0(K_{n,m}) = n$ if $n \leq m$. Furthermore, we also know from \cite{AIM} that $Z(K_{n,m})=n+m-2$.
Now, we consider the $Z_1$-game. The nullity of the graph $K_{n,m}$ is equal to the multiplicity of the eigenvalue $0$ in the adjacency matrix for $K_{n,m}$, call it $A$. Therefore, $n+m-2 \leq M_1(A) \leq Z_1(K_{n,m})$. It suffices to prove that $Z(K_{n,m}) \leq n+m-2$ to complete the proof. It is not hard to see that any set $S\cup T$, where $S\subset V_n$ and $T \subset V_m$ of size $n-1$ and $m-1$, respectively, is a zero forcing set of $K_{n,m}$. Hence, $Z(K_{n,m}) \leq n-1 +m-1 = n+m- 2$. This completes the proof.
	\end{proof}
	
	Consider the graph $Y = K_{n,m} \sqr K_2,$ and let $A$ be the adjacency matrix for $Y$.
	Let $(u_i)_{i=1,\ldots,n+m-2}$ be a basis of the eigenspace for $A$ corresponding to $0$. Let $v_1$ and $v_2$ be the generators for the eigenspaces  for $A$ of $\sqrt{nm}$ and $-\sqrt{nm}$, respectively. 	The spectrum of $A$ is
	\begin{align*}
	\sigma(A) &= \left\{ \sqrt{nm}+1^{(1)}, -\sqrt{nm}+1^{(1)}, 1^{(n+m-2)},\sqrt{nm}-1^{(1)}, -\sqrt{nm}-1^{(1)}, -1^{(n+m-2)} \right\}.
	\end{align*}
	We formulate the following conjecture regarding the graph $Y=K_{n,m} \sqr K_2$.
	\begin{conj}
		For any $n,m\geq 2$,
		\begin{align*}
			Z_q(K_{n,m} \sqr K_2) &= 
			\left\{ 
				\begin{aligned}
					&2\min(n,m)  &\mbox{ if } q = 0\\
					&n+m &\mbox{ for } q\geq 1.
				\end{aligned}
			\right. 
		\end{align*}
	\end{conj}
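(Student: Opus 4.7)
The plan is to split the conjecture into the $q=0$ claim and the $q\ge 1$ claim, treating each with a matching upper- and lower-bound argument; in both directions the lower bound will ultimately rely on the inertia inequality $M_q(Y)\le Z_q(Y)$. Label the vertex set of $Y=K_{n,m}\sqr K_2$ as $V_n^a\cup V_m^a\cup V_n^b\cup V_m^b$ and assume without loss of generality that $n\le m$. For the upper bound when $q\ge 1$, place a token on every vertex of layer $a$: each vertex of layer $b$ is then the unique uncoloured neighbour of its matched copy and is forced by the $\CCR$, giving $Z_q(Y)\le Z(Y)\le n+m$. For the upper bound when $q=0$, place $2n$ tokens on $V_n^a\cup V_n^b$; the uncoloured components are the $m$ matching edges $\{u^a,u^b\}$ with $u\in V_m$, and in each $\CCR_0$ round the player offers a single matching edge, which the oracle must return, after which any vertex of $V_n^a$ has $u^a$ as its unique uncoloured neighbour in $Y[B\cup\{u^a,u^b\}]$ and forces it, followed by $u^a$ forcing $u^b$ along the matching. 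Iterating yields $Z_0(Y)\le 2\min(n,m)$.

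For the lower bound on $Z_0(Y)$, I would show $Z_0(Y)\ge 2\min(n,m)$ either via the fort/obstruction characterisation of the positive semidefinite zero forcing number---arguing that any coloured set of size smaller than $2n$ admits an oracle response containing a component whose vertices each have at least two uncoloured neighbours---or by combining the classical identity $Z_+(K_{n,m})=\min(n,m)$ with the doubling behaviour of $Z_+$ under the Cartesian product with $K_2$ for bipartite base graphs.

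The main step, and the genuinely difficult one, is the lower bound $Z_q(Y)\ge n+m$ for $q\ge 1$. Since $Z_1\le Z_q$, it suffices to prove $M_1(Y)\ge n+m$ by exhibiting a matrix $C\in S_1(Y)$ of rank $n+m$. Following the template of the book-graph proof of Theorem~\ref{book}, write $A=A_{K_{n,m}}\otimes I_2+I_{n+m}\otimes A_{K_2}$ and search for a block-symmetric perturbation
\[
D=\begin{bmatrix}\beta_n I_n & 0 & \alpha_n I_n & 0\\ 0 & \beta_m I_m & 0 & \alpha_m I_m\\ \alpha_n I_n & 0 & \beta_n I_n & 0\\ 0 & \alpha_m I_m & 0 & \beta_m I_m\end{bmatrix}
\]
with $\alpha_n,\alpha_m\ne 1$ (so as to preserve the nonzero pattern of the matching and keep $C=A-D\in S(Y)$). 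The sum/difference change of basis decomposes $C$ into $C_+\oplus C_-$, where each block has the form $\bigl[\begin{smallmatrix}s\,I_n & J_{n,m}\\ J_{m,n}& t\,I_m\end{smallmatrix}\bigr]$ and hence spectrum $\{s^{(n-1)},t^{(m-1)},\mu^+,\mu^-\}$ with $\mu^\pm$ the two roots of $(\lambda-s)(\lambda-t)=nm$.

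The obstacle is that this four-parameter family caps out at nullity $n+m-1$: setting $s_+=t_+=0$ harvests $n+m-2$ zeros from $C_+$, but then the constraints $\alpha_n,\alpha_m\ne 1$ force $s_-,t_-\ne 0$, and the quadratic $(\lambda-s_-)(\lambda-t_-)=nm$ contributes at most one further zero; with the free diagonal parameters one can arrange this so that $C_+$ has the sole negative eigenvalue $-\sqrt{nm}$ and $C_-$ is otherwise positive, but the total nullity stops at $n+m-1$. Reaching $n+m$ will therefore require enlarging the perturbation class---for instance by weighting the $V_n$--$V_m$ blocks within each layer non-uniformly, or by breaking the symmetry between the two layers while compensating with additional diagonal shifts---and then showing that the resulting non-tensor-product spectrum still contains exactly one negative eigenvalue. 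This delicate inertia bookkeeping is where the proof of the conjecture will demand the most ingenuity.
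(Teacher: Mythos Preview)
Your proposal and the paper arrive at exactly the same place, because the statement is a \emph{conjecture} that the paper does not prove. Immediately after stating it the authors write that they ``have not completely resolved the above conjecture'' and then record precisely the partial results you obtain: the upper bounds $Z_0(Y)\le 2n$ (tokens on $V_n^a\cup V_n^b$, then peel off one matching edge per $\CCR_0$ round) and $Z(Y)\le n+m$ (tokens on an entire layer), together with the lower bound $Z_1(Y)\ge n+m-1$ coming from a matrix in $S_1(Y)$ built from the same tensor-structured perturbation
\[
C=\begin{bmatrix}\tfrac{\sqrt{nm}}{2} & \tfrac{\sqrt{nm}}{2}-1\\ \tfrac{\sqrt{nm}}{2}-1 & \tfrac{\sqrt{nm}}{2}\end{bmatrix}\otimes I_{n+m}
\]
that you analyse. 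The paper then concludes only that $Z_q(Y)\in\{n+m-1,n+m\}$ for $q\ge 1$. So the obstacle you describe---that the four-parameter diagonal-plus-matching perturbation caps the nullity at $n+m-1$ because the $C_-$ block cannot be made to vanish without killing the matching---is not a flaw in your argument but the genuine open gap; the paper does not bridge it either, and your diagnosis of why a richer (non-tensor) perturbation would be needed is accurate.

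Two smaller remarks. First, the paper in fact never supplies the lower bound $Z_0(Y)\ge 2\min(n,m)$, despite announcing ``we prove that $Z_0(Y)=2n$''; only the upper bound is argued. Your two suggested routes are therefore not redundant with the paper, but they are also not yet proofs: the ``doubling behaviour of $Z_+$ under $\sqr K_2$ for bipartite base graphs'' is not a theorem in the literature, and the fort argument would have to be carried out explicitly (the vertex-connectivity bound $Z_0\ge\kappa$ is too weak here, since $\kappa(K_{n,m}\sqr K_2)=\min(n,m)+1<2\min(n,m)$ once $\min(n,m)\ge 2$). Second, in your $\CCR_0$ description the player offers components and the oracle returns a nonempty subset; your phrasing is fine, but be careful not to say the player ``returns'' anything.
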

Currently, we have not completely resolved the above conjecture, but we are able to provide some fairly compelling bounds. Consider the following argument.
	Without loss of generality, assume that $n<m$. First, we prove that $Z_0(Y) = 2n$. Place $2n$ tokens on the two copies of the bipartition of $K_{n,m}$ in $Y$. Let $S$ be the set of these vertices. The graph $Y[V(Y)\setminus S]$ is a disjoint union of edges. It is straightforward that we can complete the $Z_0$-game by returning an endpoint of each such edge, in each round. Hence, $Z_0(Y) \leq 2n$.

	Consider the matrix, 
	$$C = \begin{bmatrix}
	\frac{\sqrt{nm}}{2} & \frac{\sqrt{nm}}{2}-1\\
	\frac{\sqrt{nm}}{2}-1 & \frac{\sqrt{nm}}{2}
	\end{bmatrix} \otimes I_{n+m} = \begin{bmatrix}
	\frac{\sqrt{nm}}{2} & 0\\
	0 & \frac{\sqrt{nm}}{2}
	\end{bmatrix} \otimes I_{n+m} + \begin{bmatrix}
	0 & \frac{\sqrt{nm}}{2}-1\\
	\frac{\sqrt{nm}}{2}-1 & 0
	\end{bmatrix} \otimes I_{n+m}.$$ 
	
	The spectrum of the matrix $C$ is
	\begin{align*}
	\sigma(C) &= \left\{ \sqrt{nm}-1^{(n+m)},1^{(n+m)} \right\}.
	\end{align*}
	
	Applying a similar argument as in the proof of Theorem \ref{book} we can show that there is a matrix $A$ in $S_1(Y)$ with nullity $n+m-1$. Hence  $Z_1(Y) \geq n+m-1$. 
	Let $S$ be the vertices corresponding to a layer of $K_{n,m}$ in $Y$. It is easy to see that $S$ is a zero-forcing set in the Cartesian product $K_{n,m} \sqr K_2$. Hence, $Z(Y) \leq n+m$. We conclude that $Z_q(Y) \in \{n+m-1,n+m\}$, for any $q\geq 1$.
	
%
%
	
	\subsection{Strongly regular graphs}
	Let $G$ be a strongly regular graph with parameter $(n,k,\lambda,\mu)$ and assume that $\sigma(A) = \{ k^{(1)}, \theta^{(n-1-\ell)}, \tau^{(\ell)} \}$, where $\tau <0$ and $A$ is the adjacency matrix of $G$, see \cite{GM} for more details.
	\begin{prop} If $G$ is a strongly regular graph with $\ell$ negative eigenvalues, then
		$Z_0(G) \geq \ell$ and $Z_1(G)\geq n-1-\ell$.\label{lem:lower-bound-srg}
	\end{prop}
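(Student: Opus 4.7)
The plan is to exhibit explicit matrices in $S_0(G)$ and $S_1(G)$ with the desired nullity and invoke the bound $M_q(G)\leq Z_q(G)$ stated in the introduction. Both matrices will be simple shifts of the adjacency matrix $A$ by a scalar multiple of the identity, which leaves the off-diagonal zero/nonzero pattern of $A$ unchanged and therefore automatically keeps the matrix in $S(G)$. So the work reduces to checking inertia and nullity from the SRG spectrum.

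For the first inequality, I would consider $B_0 = A - \tau I$, whose spectrum is $\{(k-\tau)^{(1)},\, (\theta-\tau)^{(n-1-\ell)},\, 0^{(\ell)}\}$. Since $\tau<0$ and $\theta \geq 0$ (the latter is forced by the hypothesis that $\ell$ equals the number of negative eigenvalues of $A$), both $k-\tau$ and $\theta-\tau$ are strictly positive, so $B_0$ is positive semidefinite with $\eta(B_0)=\ell$. Because its diagonal entries equal $-\tau \neq 0$ and its off-diagonal pattern matches $G$, we have $B_0 \in S_0(G)$. Hence $Z_0(G) \geq M_0(G) \geq \eta(B_0) = \ell$.

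For the second inequality, I would use the companion shift $B_1 = \theta I - A$, whose spectrum is $\{(\theta-k)^{(1)},\, 0^{(n-1-\ell)},\, (\theta-\tau)^{(\ell)}\}$. Connectedness of $G$ (implicit in $k$ being a simple eigenvalue) gives $k>\theta$, so $\theta-k<0$; and $\theta \geq 0 > \tau$ gives $\theta-\tau>0$. Therefore $B_1$ has exactly one negative eigenvalue, nullity $n-1-\ell$, diagonal $\theta$, and off-diagonal entries $-1$ on edges and $0$ elsewhere. So $B_1 \in S_1(G)$ and $Z_1(G) \geq M_1(G) \geq \eta(B_1) = n-1-\ell$.

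There is essentially no obstacle to this plan beyond bookkeeping. The only fine point is that the hypothesis encodes two sign inequalities ($\theta \geq 0$ coming from ``$\ell$ negative eigenvalues,'' and $k>\theta$ coming from the simplicity of $k$ for connected $k$-regular graphs); once these are noted, each claimed eigenvalue calculation is immediate and both pattern conditions hold by construction.
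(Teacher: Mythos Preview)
Your proof is correct and essentially identical to the paper's: both shift the adjacency matrix by $-\tau I$ for the $Z_0$ bound and by $\theta I$ (equivalently, take $-A+\theta I$) for the $Z_1$ bound, then read off the inertia from the SRG spectrum. You supply slightly more justification (noting that $\theta\ge 0$ follows from the hypothesis on~$\ell$ and that $k>\theta$ follows from connectedness), but the argument is the same.
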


	\begin{proof}
		Let $B = A-\tau I  $. It is obvious that $B$ belongs to $S_0(G)$. Moreover, every eigenvalue of $B$ is nonnegative and $0$ is an eigenvalue with multiplicity $\ell$. Hence, $Z_0(G) \geq M_0(G) \geq \ell$. Similarly, let $C=-A+\theta I$. Then $C \in S_{1}(G)$ and the nullity of $C$ is $n-1-\ell.$ Thus $Z_1(G) \geq M_1(G) \geq n-1-\ell$.
	\end{proof}
	
	The bounds in Proposition~\ref{lem:lower-bound-srg} are sharp, as shown in the following example.
	\begin{exam}
		Consider the Petersen graph denoted by $G$. Then $G$ is strongly regular with spectrum $\sigma(A) = \{3^{(1)},1^{(5)},-2^{(4)}\}$. By Proposition~\ref{lem:lower-bound-srg}, we have $Z_0(G)\geq 4$ and $Z_1(G) \geq 5$.  It is not difficult to verify that $Z_0(G) = 4$ and since $Z(G)=5$, it follows that $Z_1(G) = 5$.
	\end{exam}
	\begin{question}
		What can we say about $Z_0$ and $Z_1$ for strongly regular graphs in general?.
	\end{question}
	
	We consider a special case of the above question below.
	
	\subsubsection{Complete multipartite graphs}
	Let $G_{n,\ell} := K_{\underbrace{n,n,\ldots,n}_{\ell}}$. The spectrum of $A$ is
	\begin{align*}
	\sigma(A) = \left\{ n(\ell-1)^{(1)},-n^{(\ell-1)}, 0^{(\ell(n-1))} \right\},
	\end{align*} where $A$ is the adjacency matrix for $G_{n,\ell}$.

The following is straightforward.
\begin{obs}
		Let $n\geq 2$, $\ell \geq 3$ and $S\subset V(G_{n,\ell})$. Then, $G_{n,\ell}[S]$ is connected if and only if $S$ is not contained in a single part of $G_{n,\ell}$. Moreover, if $G_{n,\ell}[S]$ is disconnected, then it is a union of isolated vertices. \label{lem:cmp-connectedness}
	\end{obs}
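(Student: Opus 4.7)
The plan is to argue directly from the definition of the complete multipartite graph $G_{n,\ell}$, splitting the observation into the two implications of the biconditional together with the ``moreover'' clause. Write the parts of $G_{n,\ell}$ as $V_1,\ldots,V_\ell$, so that two vertices are adjacent precisely when they lie in different parts. Throughout we take $|S|\ge 2$ (the claim for a singleton is degenerate).

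First I would dispatch the ``only if'' direction together with the ``moreover'' statement simultaneously. Suppose $S\subseteq V_i$ for some $i$. Since $G_{n,\ell}$ has no edges inside any single part, the induced subgraph $G_{n,\ell}[S]$ contains no edges at all, so it is a disjoint union of isolated vertices. In particular it is disconnected (as $|S|\ge 2$), which simultaneously proves that containment in a single part forces disconnectedness and that the only way $G_{n,\ell}[S]$ can be disconnected is as an edgeless graph on $|S|$ vertices.

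For the converse, suppose $S$ is not contained in any single part, so $S$ meets at least two of the $V_i$'s. I would show $G_{n,\ell}[S]$ is connected by verifying that any two vertices $u,v\in S$ lie in the same component. If $u$ and $v$ belong to different parts, they are already adjacent. If instead $u,v\in V_i$ for the same $i$, then by hypothesis there exists $w\in S\cap V_j$ for some $j\neq i$; such a $w$ is adjacent to both $u$ and $v$, giving the path $u\text{-}w\text{-}v$ in $G_{n,\ell}[S]$.

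Nothing technical blocks the argument: the entire observation reduces to the defining adjacency rule of complete multipartite graphs. The one thing I would be careful about is the trivial case $|S|=1$ (where $G_{n,\ell}[S]$ is tautologically a single isolated vertex and is both ``connected'' and a ``union of isolated vertices''); this is simply excluded or treated as a boundary case, and the hypotheses $n\ge 2$ and $\ell\ge 3$ play no role beyond guaranteeing the ambient graph is non-trivial.
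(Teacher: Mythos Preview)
Your argument is correct and is exactly the elementary verification the paper has in mind; the paper itself gives no proof, merely labelling the observation ``straightforward.'' One small wording point: your first paragraph alone does not yet show that disconnectedness forces $G_{n,\ell}[S]$ to be edgeless---that conclusion only follows once the converse implication in your second paragraph is in hand (disconnected $\Rightarrow$ contained in a single part $\Rightarrow$ edgeless)---but since you do supply that converse, the logic is complete. Your remark about the degenerate case $|S|=1$ is also apt.
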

	\begin{prop}\cite{AIM}
		For any $n\geq 1$ and $\ell \geq 3$, $Z(G_{n,\ell}) = n\ell -2$.\label{prop:cmp-zero-forcing}
	\end{prop}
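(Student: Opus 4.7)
The plan is to prove $Z(G_{n,\ell}) \leq n\ell - 2$ and $Z(G_{n,\ell}) \geq n\ell - 2$ separately. I would flag at the outset that the statement requires $n \geq 2$: for $n=1$ one has $G_{1,\ell}=K_\ell$ with $Z(K_\ell)=\ell-1 \neq n\ell - 2$, so the formula as stated is intended for $n \geq 2$ (with $\ell \geq 3$).

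For the upper bound, I would produce an explicit zero forcing set of size $n\ell - 2$. Writing the parts as $P_1, \ldots, P_\ell$, pick $u \in P_1$ and $v \in P_2$ and set $S = V(G_{n,\ell}) \setminus \{u, v\}$. Since $n \geq 2$, some vertex $x \in P_1 \setminus \{u\}$ lies in $S$, and because $x$ is adjacent precisely to the vertices outside $P_1$, its only uncoloured neighbour is $v$, so the CCR forces $v$. Once $v$ is coloured, any coloured vertex outside $P_1$ has $u$ as its unique remaining uncoloured neighbour and forces $u$, giving $Z(G_{n,\ell}) \leq n\ell - 2$.

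For the lower bound, I would invoke the standard observation that a zero forcing set must intersect every nonempty \emph{fort}, where $F \subseteq V$ is a fort when no vertex of $V \setminus F$ has exactly one neighbour in $F$ (the point being that no vertex of $F$ can be forced while $F$ remains entirely uncoloured, so $F$ is trapped). It then suffices to show that any three-element subset $U$ of $V(G_{n,\ell})$ contains a nonempty fort, whence $|V \setminus S| \leq 2$ for every zero forcing $S$. A short case split handles this: if two elements of $U$ lie in a common part $P_i$, that pair is a fort, because every vertex outside $P_i$ is adjacent to both and every vertex of $P_i$ outside the pair is adjacent to neither; otherwise the three elements of $U$ occupy three distinct parts and $U$ itself is a fort, because every outside vertex sees either all three (when it lies in a fourth part) or exactly two (when it lies in one of the three occupied parts, distinct from the chosen representative).

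The step I expect to require the most care is invoking the fort characterization cleanly, which is standard in the zero-forcing literature but is not explicitly set up in the excerpt; if preferred, one could replace this by the inequality $Z(G) \geq M(G)$ together with a direct construction of a rank-$2$ matrix in $S(G_{n,\ell})$ with nullity $n\ell-2$. Such a matrix is built by assigning each vertex a vector $w_i \in \mathbb{R}^2$ so that the indefinite form $u_i u_j - v_i v_j$ vanishes on within-part pairs and is nonzero between parts (as one can verify for the octahedron $K_{2,2,2}$ using directions drawn from $\{(1,0),(0,1),(1,1),(1,-1)\}$), and then extended to arbitrary $n$ and $\ell$ by reusing these directions within each part.
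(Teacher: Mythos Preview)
The paper does not supply a proof of this proposition; it is simply quoted from \cite{AIM}. So there is nothing to compare against, and the relevant question is whether your argument stands on its own.

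Your main argument is correct. The upper bound via $S=V\setminus\{u,v\}$ is fine (and your caveat that one needs $n\ge 2$ is accurate and worth flagging: for $n=1$ the graph is $K_\ell$ and $Z(K_\ell)=\ell-1$). The fort-based lower bound is also correct: your two cases cover all three-element subsets, and in each case the subset you identify really is a fort of the whole graph, so any zero forcing set must meet it. This yields $|V\setminus S|\le 2$ cleanly. The fort machinery is standard in the zero-forcing literature even though the present paper does not develop it, so a one-line citation would suffice.

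Your proposed \emph{alternative} lower bound, however, has a genuine gap. A rank-$2$ matrix in $S(G_{n,\ell})$ would give $M(G_{n,\ell})\ge n\ell-2$, but no such matrix exists once $n\ge 3$ and $\ell\ge 3$. Indeed, any real symmetric rank-$2$ matrix can be written as $A_{ij}=w_i^{\!\top}Qw_j$ with $w_i\in\mathbb{R}^2$ and $Q$ either definite or of signature $(1,1)$. In either case the within-part condition forces the $n\ge 3$ vectors of a single part to be pairwise $Q$-orthogonal and nonzero; in $\mathbb{R}^2$ this is only possible if they are all proportional to a common isotropic direction of $Q$, and there are at most two such directions, which cannot accommodate three parts. (Your octahedron example with directions $\{(1,0),(0,1),(1,1),(1,-1)\}$ actually uses the \emph{Euclidean} inner product, not the indefinite one, and does not extend to $n\ge 3$.) So keep the fort argument as your lower bound and drop the rank-$2$ alternative.
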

	
	For general $q$, we formulate the following conjecture concerning the complete multipartite graph.
	
	\begin{conj}
		For any $n\geq 1$ and $\ell  \geq 3$, 
		\begin{align*}
			Z_q(G_{n,\ell})
			&=
			\left\{
				\begin{aligned}
					&n(\ell-1) &\mbox{ if }q=0,\\
					&n\ell -2 & \mbox{ if }q\geq 1.
				\end{aligned}
			\right.
		\end{align*}
		
	\end{conj}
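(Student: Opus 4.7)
The plan is to settle the two cases separately, with Observation~\ref{lem:cmp-connectedness} as the key structural input in each. For $q = 0$, place $n(\ell-1)$ tokens on every vertex of $\ell-1$ of the $\ell$ parts. The remaining $n$ uncoloured vertices lie in one part, and by Observation~\ref{lem:cmp-connectedness} they form $n$ singleton components of $G[V \setminus B]$. In each round the player invokes operation 3 and offers one singleton component $\{w\}$ to the oracle (allowed since $q+1 = 1$); the oracle is forced to return $\{w\}$, and in $G[B \cup \{w\}]$ every coloured vertex outside the part of $w$ has $w$ as its unique uncoloured neighbour, so the CCR forces $w$. Iterating exhausts the remaining part, giving $Z_0(G_{n,\ell}) \leq n(\ell-1)$. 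The matching lower bound comes from $Z_0(G) \geq \kappa(G)$ together with $\kappa(G_{n,\ell}) = n(\ell-1)$: a disconnecting cut leaves two surviving vertices pairwise non-adjacent across the cut, which (since non-adjacent pairs in $G_{n,\ell}$ share a part) forces the entire remnant into a single part of size $n$, so the cut must have size at least $n\ell - n$.

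\textbf{Case $q \geq 1$.} The upper bound $Z_q(G_{n,\ell}) \leq Z(G_{n,\ell}) = n\ell - 2$ is immediate from Proposition~\ref{prop:cmp-zero-forcing} and the monotonicity $Z_q \leq Z$. For the lower bound I would prove the stronger claim that operation 3 cannot force any new vertex whenever $q \geq 1$, so that the $Z_q$-game collapses to the ordinary $Z$-game. Whenever operation 3 is legal there are $\geq q+1 \geq 2$ components in $G[V \setminus B]$, and by Observation~\ref{lem:cmp-connectedness} this means all uncoloured vertices lie in a single part $P_k$ and form singletons. The player must offer at least two of these singletons, and an adversarial oracle returns a subset $S$ with $|S| \geq 2$. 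In $G[B \cup S]$ a coloured vertex inside $P_k$ has no uncoloured neighbour (no edges within a part), while every coloured vertex outside $P_k$ sees all $|S| \geq 2$ vertices of $S$ as uncoloured neighbours, so no CCR move forces anything. Hence operation 3 contributes nothing, and combined with the upper bound this gives $Z_q(G_{n,\ell}) = n\ell - 2$ for every $q \geq 1$.

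\textbf{Main obstacle.} The delicate point is the uniform blocking of operation 3: one must ensure the oracle can defeat every selection the player might make, at every stage of the game, not just the minimum selection of size $q+1$. Observation~\ref{lem:cmp-connectedness} pins down the only shape the uncoloured subgraph can take when operation 3 is legal (singletons in a single part), and from there the oracle's blanket response of ``return all offered components'' always yields $|S| \geq 2$ and disables the CCR. One caveat worth flagging is that the conjecture as stated allows $n = 1$, where $G_{1,\ell} = K_\ell$ admits no disconnected induced subgraph and the formula $n\ell - 2$ collides with $Z_0(K_\ell) = Z(K_\ell) = \ell - 1$, so the argument above is genuinely valid only for $n \geq 2$, matching the hypothesis of Observation~\ref{lem:cmp-connectedness}.
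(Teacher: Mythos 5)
The statement you are addressing is left as a \emph{conjecture} in the paper: the authors only record $Z_0(G_{n,\ell})=n(\ell-1)$ (citing Peters), the upper bound $Z_q\leq Z=n\ell-2$ from Proposition~\ref{prop:cmp-zero-forcing}, and a partial lower-bound attempt for $q=1$ that fixes an up-front token set of size $t<n\ell-2$, disposes of the case where all uncoloured vertices lie in one part, and explicitly declares the case where they span two parts ``unresolved.'' Your argument takes a genuinely different route that, as far as I can check, closes that gap for $n\geq 2$. The decisive idea is to reason about the oracle rather than the token placement: for $q\geq 1$, operation~3 is legal only when $G[V\setminus B]$ has at least two components, which by Observation~\ref{lem:cmp-connectedness} forces every uncoloured vertex to be a singleton inside a single part $P_k$; the oracle strategy ``return every offered component'' then hands back at least two vertices of $P_k$, and in the resulting induced subgraph no coloured vertex can force (vertices of $P_k$ have no uncoloured neighbour, vertices outside $P_k$ have at least two). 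Because this blocks operation~3 at \emph{every} state of the game, not merely at an initial configuration, any winning play against this oracle consists only of operations 1 and 2 and is therefore a $Z$-game play, costing at least $Z(G_{n,\ell})=n\ell-2$ tokens; together with the trivial upper bound this yields $Z_q=n\ell-2$ for all $q\geq 1$. This is precisely what the paper's approach cannot reach, since tracking CCR propagation from a fixed token set into a multi-part uncoloured configuration becomes unnecessary once the $q$-rule is seen to be universally neutralized. Your $q=0$ argument (tokens on $\ell-1$ full parts for the upper bound, $Z_0\geq\kappa(G_{n,\ell})=n(\ell-1)$ for the lower bound) is consistent with the facts the paper cites. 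Your caveat about $n=1$ is a genuine catch: $G_{1,\ell}=K_\ell$ has $Z=\ell-1>\ell-2$, so the conjectured formula only makes sense for $n\geq 2$, matching the hypothesis of Observation~\ref{lem:cmp-connectedness}; the conjecture (and Proposition~\ref{prop:cmp-zero-forcing}) should be restated with that restriction.
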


	 	First, observe that $Z_0(G_{n,\ell}) = n(\ell-1)$ follows from the work in \cite{peters} and from Proposition \ref{prop:cmp-zero-forcing} $Z(G_{n,\ell})=n\ell -2$.
	 	A natural next step would be to verify that  $Z_1(G_{n,\ell}) \geq n\ell - 2$. One potential argument may go as follows. Suppose that $Z_1(G_{n,\ell}) = t <n\ell - 2$. Assume that the $t$ tokens are placed on $S$ and that there is a winning strategy with these $t$ tokens. Since $t\leq n\ell -3$, there are  $n\ell-t \geq 3$ uncoloured vertices in $G_{n,\ell}$. 
	 	We distinguish the following cases whether these vertices are in a single part or not.
	 	\\ \\
	 	{\it Case~1. All uncoloured vertices $S$ are in a partite set $V_i$, for some $i\in \{1,2,\ldots,\ell\}$}.
	 	
	 	Since all uncoloured vertices are in a single part of $G_{n,\ell}$, the subgraph $G_{n,\ell} \left[ S \right]$ is a union of isolated vertices (see Observation ~\ref{lem:cmp-connectedness}). Since we must provide at least $2$ of these isolated vertices to the oracle, the oracle only needs to return $2$ vertices to ensure we  cannot proceed. So there is no optimal strategy in this case.
	 	\\ \\
	 	{\it Case~2. Two uncoloured vertices are in different parts.} This case currently remains unresolved.
	 	
	 %
	 %

	\subsection{Graphs in the triangular association scheme}
	Let $G_n$ be the Kneser graph $K(n,2)$, for $n\geq 5$. In this section, we prove a result on the $Z_0(G_n)$.
	
	First, we recall the following result by Bre\v{s}ar et al \cite{brevsar2019grundy}.
	
	\begin{theorem}
		\hfil
		\begin{enumerate}
			\item For $r\geq 2$ and $2r+1 \leq n \leq 3r$, $Z(K(n,k)) \leq \binom{n}{2} - \binom{2r}{3} + \binom{4r-1-n}{3r-n}$.
			\item For $r\geq 2$ and $n\geq 3r+1$, $Z(K(n,r))= \binom{n}{r}- \binom{2r}{r}$.
		\end{enumerate}
	\end{theorem}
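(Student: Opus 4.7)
The plan is to handle the two parts separately, using a common structural device: a fixed $2r$-element subset $A \subseteq [n]$, whose $r$-subsets induce a copy of the Kneser graph $K(2r,r)$ inside $K(n,r)$. This induced subgraph is a perfect matching that pairs each $v \subseteq A$ with $A \setminus v$, and the quantities $\binom{n}{r} - \binom{2r}{r}$ in part (2), together with its perturbation in part (1), are measured exactly against the size of this matching.

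For the upper bounds I plan to exhibit explicit zero forcing sets of the claimed sizes. A first attempt leaves $V(K(2r,r))$ uncoloured, but a routine check defeats it: if $u \not\subseteq A$ with $|u \cap A| = j < r$, the number of uncoloured neighbours of $u$ is $\binom{2r-j}{r} \geq 2$, so no forcing can begin. The successful construction therefore has to choose the $\binom{2r}{r}$ uncoloured vertices more carefully, most likely by inductively extending a zero forcing set for $K(n-1,r)$ using the $\binom{n-1}{r-1}$ new $r$-subsets that contain the element $n$, arranged so that the matching $K(2r,r)$ is eventually picked off pair by pair: once any single $v \subseteq A$ becomes coloured, $v$ itself forces its partner $A \setminus v$ immediately. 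The hypothesis $n \geq 3r+1$ provides enough slack in $[n] \setminus A$ to launch the first of these forcings; in the narrower range $2r+1 \leq n \leq 3r$ this slack disappears and one must keep $\binom{4r-1-n}{3r-n}$ extra vertices coloured to prime the chain, which accounts exactly for the corrective term in part (1).

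For the lower bound in part (2) I would translate the claim via the duality $Z(G) = |V(G)| - \gamma^Z_{\mathrm{gr}}(G)$ with the $Z$-Grundy domination number, reducing the statement to $\gamma^Z_{\mathrm{gr}}(K(n,r)) \leq \binom{2r}{r}$: no legal footprint sequence in $K(n,r)$ can be longer than $\binom{2r}{r}$. The argument I would attempt is that the vertices of such a sequence, together with their footprints, must live inside the $r$-subsets of a single $2r$-element subset of $[n]$ by an intersecting-family / Erd\H{o}s--Ko--Rado style extremal argument. The step I expect to be the main obstacle is exactly this confinement lemma: showing that the legal footprints cannot sprawl outside a single $2r$-set is delicate, needs the abundance of room provided by $n \geq 3r+1$, and is also the step that breaks down in the regime of part (1) — explaining why that case admits only an upper bound in the cited result.
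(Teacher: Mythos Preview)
This theorem is not proved in the present paper at all: it is quoted verbatim as a result of Bre\v{s}ar, Kos, and Torres (the line immediately preceding the statement reads ``we recall the following result by Bre\v{s}ar et al.'' with citation to \cite{brevsar2019grundy}), and no argument for it appears anywhere in the text. So there is no ``paper's own proof'' against which to compare your proposal.

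For what it is worth, your framework is aligned with the cited source. The title of \cite{brevsar2019grundy} already tells you that the lower bound in part~(2) is obtained through the duality $Z(G)=|V(G)|-\gamma^Z_{\mathrm{gr}}(G)$ that you invoke, and your upper-bound idea of leaving uncoloured a carefully chosen set of size $\binom{2r}{r}$ anchored on a fixed $2r$-set is the right shape; you are also correct that the naive choice (all $r$-subsets of a fixed $2r$-set) fails and has to be repaired. What you have written, however, is expressly a plan and not a proof: you yourself identify the confinement step for $Z$-Grundy sequences as the main obstacle and do not supply it, and the inductive upper-bound construction is only gestured at. If your goal is a self-contained argument, you will need to carry out both of those steps in detail, for which the primary reference is \cite{brevsar2019grundy} rather than the paper at hand.
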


	\begin{cor}
		For $n\geq 7$, $Z(K(n,2)) = \binom{n}{2} - 6$. Moreover, $Z(K(5,2)) = 5$ and $Z(K(6,2)) = 10$.
	\end{cor}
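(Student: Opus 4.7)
The plan is to handle the three assertions of the corollary separately, relying primarily on the immediately preceding theorem and on the earlier analysis of the Petersen graph.

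First, for the main claim $Z(K(n,2)) = \binom{n}{2} - 6$ with $n \geq 7$, I would simply specialize item (2) of the cited theorem to $r=2$. Since the hypothesis $n \geq 3r+1 = 7$ is exactly the range we want, that theorem delivers $Z(K(n,2)) = \binom{n}{2} - \binom{2r}{r} = \binom{n}{2} - \binom{4}{2} = \binom{n}{2} - 6$ with no additional work.

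For the identity $Z(K(5,2))=5$, I would observe that $K(5,2)$ is the Petersen graph and invoke the example at the start of this subsection, where $Z$ of the Petersen graph was recorded as $5$. So this case reduces to a citation.

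The genuinely nontrivial part is the value $Z(K(6,2)) = 10$, since the preceding theorem only places $K(6,2)$ in case (1) and hence supplies the (non-tight) upper bound $\binom{6}{2} - \binom{4}{3} + \binom{1}{0} = 12$. My plan is to split this into the usual matching upper and lower bounds. For the upper bound, I would exhibit an explicit zero forcing set of size $10$ in $T(6) \cong K(6,2)$ (vertices are $2$-subsets of $\{1,\ldots,6\}$, adjacency is disjointness), ideally by leaving uncoloured one cleverly chosen triangle of three pairwise disjoint pairs together with two more vertices, so that the colour change rule can propagate through the remaining $10$ vertices. For the lower bound $Z(K(6,2)) \geq 10$, I would aim to construct a weighted adjacency matrix $B \in S(K(6,2))$ whose nullity is at least $10$, yielding $Z(K(6,2)) \geq M(K(6,2)) \geq 10$.

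The expected main obstacle is the lower bound for $n=6$. The obvious shifts of the adjacency matrix coming from the strongly regular spectrum $\{6^{(1)}, 1^{(9)}, -3^{(5)}\}$ are $A-I$ and $A+3I$, which give nullities only $9$ and $5$ respectively, so no unweighted spectral shift suffices. The construction will have to break the adjacency-algebra symmetry, likely by re-weighting the entries using the underlying $6$-element ground set (for instance, treating $T(6)$ as the complement of $L(K_6)$ and using the Johnson-scheme idempotent decomposition), and will require care to confirm both that the zero/nonzero pattern lies in $S(K(6,2))$ and that the dimension of the kernel is exactly $10$.
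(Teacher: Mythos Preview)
The paper gives no proof of this corollary at all; it is simply stated right after the Bre\v{s}ar--Kos--Torres theorem and left to rest on that citation. Your handling of $n\geq 7$ (specialising part~(2) with $r=2$) and of $n=5$ (pointing to the Petersen-graph example) is exactly the intended reading, and nothing further is needed there.

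For $n=6$ you go well beyond the paper, which offers nothing but the citation. Your upper-bound plan is sound: an explicit $10$-element zero forcing set does exist (for instance, leave uncoloured the triangle $\{1,2\},\{3,4\},\{5,6\}$ together with $\{1,3\}$ and $\{1,5\}$ and check that the CCR propagates). The risky part is your lower bound. You are asking for a matrix in $S(K(6,2))$ of rank~$5$, but the largest adjacency eigenspace has dimension only~$9$, and there is no evident Johnson-scheme or symmetry-breaking trick that drops the rank to~$5$ while preserving the disjointness pattern; indeed it is not clear that $M(K(6,2))=10$ holds at all, so the nullity route may simply be a dead end. If you want a self-contained argument, the safer path for $Z(K(6,2))\geq 10$ is a direct combinatorial analysis (or computer check) showing that no $9$-set is a zero forcing set, which is how the small cases are actually settled in the cited reference, rather than via maximum nullity.
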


	It is well-known that the eigenvalues of the adjacency matrix $A$ of $G_n$  are given by $\{\binom{n-2}{2}^{(1)}, (-n-3)^{(n-1)}, 1^{(\binom{n}{2}-n)}\}$ (see, for example, \cite{GM}). Setting $B = -A + \binom{n-2}{2}I$ we have $B \in S_{1}(G_n)$ and the nullity of $B$ is equal to  $\binom{n}{2}-n = \binom{n-1}{2}-1$. Hence $Z_1(G_n) \geq M_{1}(G_n) \geq  \binom{n-1}{2}-1$.
Moreover, if we place $\binom{n-1}{2}$ tokens on all vertices of $G_{n-1}$,
then it is easy to see that we can force all vertices in the $Z_1$-game. Therefore,
	\begin{align*}
		\binom{n-1}{2}-1 \leq Z_1(G_n) \leq \binom{n-1}{2}.
	\end{align*}
	
	We conjecture the following.
	\begin{conj}
		For any $n\geq 8$, $Z_0(G_n) = \binom{n-1}{2}$. Moreover, $Z_0(G_n) = \binom{n}{2}-6$ for $n\in \{5,6,7\}$. Further, $Z_0(G_n) = Z_1(G_n)$ for any $n\geq 7$ and $Z_1(G_n) = Z_0(G_n) +1$ for $n\in \{5,6\}$.
	\end{conj}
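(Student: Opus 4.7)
The plan has three components: the upper bound, the main lower bound for $n \ge 8$, and clean-up for small $n$.

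For the upper bound, I argue that $B := \binom{[n-1]}{2}$ of size $\binom{n-1}{2}$ is a PSD zero forcing set of $G_n$. The uncoloured set $W = \{\{i,n\} : 1 \le i \le n-1\}$ is an intersecting family, so $G_n[W]$ is edgeless and each $\{i,n\}$ is its own connected component; in the $Z_0$-game, offering the singleton component $\{\{i,n\}\}$ forces the oracle to return it, and any $2$-subset of $[n-1] \setminus \{i\}$ (lying in $B$ and disjoint from $\{i,n\}$) has $\{i,n\}$ as its unique uncoloured neighbour in the induced subgraph, so forces it. Iterating gives $Z_0(G_n) \le \binom{n-1}{2}$ for all $n \ge 4$.

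For the lower bound $Z_0(G_n) \ge \binom{n-1}{2}$ when $n \ge 8$, I plan to use the standard fort-based characterization: $B$ is a PSD zero forcing set if and only if $V(G_n) \setminus B$ contains no \emph{PSD fort}, where a PSD fort is a nonempty $F \subseteq V(G_n)$ such that for every $v \notin F$ and every connected component $C$ of $G_n[F]$, $|N(v) \cap C| \ne 1$. It therefore suffices to prove that every $W \subseteq V(G_n)$ with $|W| \ge n$ contains a PSD fort. Two canonical fort-families in $K(n,2)$ can be catalogued at once. Family (i): $\binom{S}{2}$ for any $S \subseteq [n]$ with $|S| \ge 5$, since $G_n[\binom{S}{2}] = K(|S|,2)$ is connected and each $v \notin \binom{S}{2}$ has $\binom{|S|-1}{2} \ge 6$ or $\binom{|S|}{2}$ neighbours in it. Family (ii): any matching of $k \ge 4$ pairwise disjoint $2$-subsets, since then $G_n[F] = K_k$ and every outside $v$ meets $0$, $1$, or $2$ of the matching pairs, giving $|N(v) \cap F| \in \{k-2, k-1, k\}$, all at least $2$.

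The main obstacle is proving that every $W$ with $|W| \ge n$ contains some such fort. My plan is a case analysis on the support structure of $W$: if $\binom{S}{2} \subseteq W$ for some $5$-element $S$, invoke family (i); if $W$ contains $4$ pairwise disjoint pairs, invoke family (ii); otherwise a hybrid fort must be constructed. A prototypical hybrid case is $W = \{\{1,j\} : 2 \le j \le n\} \cup \{\{a,b\}\}$ of size $n$, where a direct check shows that for $n \ge 6$ the star component $F = \{\{a,b\}\} \cup \{\{1,j\} : j \notin \{a,b\}\}$ of size $n-2$ is a PSD fort -- every $v \notin F$ has either $0$ or $n-4 \ge 2$ neighbours in $F$. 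Generalising this construction to every non-concentrated, matching-free configuration of size $n$ is expected to be the technical heart of the argument; Erd\H{o}s--Ko--Rado together with the bound $|W| \ge n$ should force enough structure in $W$ for a fort to emerge in every case.

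Finally, the cases $n \in \{5,6,7\}$ are within direct-enumeration range ($|V(G_n)| \le 21$) and should yield $Z_0(G_n) = \binom{n}{2} - 6$ and, for $n \in \{5,6\}$, $Z_1(G_n) = Z_0(G_n)+1$; the Petersen base case is already covered by the $M_0$ lower bound in the excerpt's strongly-regular example. Once $Z_0(G_n) = \binom{n-1}{2}$ is secured for $n \ge 8$, the equality $Z_0(G_n) = Z_1(G_n)$ for $n \ge 7$ follows immediately from the sandwich $Z_0(G_n) \le Z_1(G_n) \le \binom{n-1}{2}$, the upper bound on $Z_1$ being the one already noted in the excerpt right before the conjecture.
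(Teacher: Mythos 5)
Note first that the statement you are proving is posed in the paper as a \emph{conjecture}, not a theorem: the authors' own partial progress (the treewidth bound $\tw(G_n)\le Z_0(G_n)$ combined with Harvey--Wood) only yields $Z_0(G_n)\ge \binom{n-1}{2}-1$ for $n\ge 8$, and their subsequent theorem deliberately records the value as ``$\binom{n-1}{2}-1$ or $\binom{n-1}{2}$.'' So a complete argument here would have to go strictly beyond the paper. Your upper bound is correct and coincides with theirs (colour the complement of a maximum coclique, i.e.\ a copy of $G_{n-1}$, and hand the oracle the remaining isolated vertices one at a time), and your idea of replacing the treewidth bound by a PSD-fort argument is a genuinely different and, in principle, sharper route to the lower bound; the reduction itself is sound, since $Z_0=Z_+$ and $Z_0(G_n)\ge\binom{n-1}{2}$ is equivalent to the assertion that every $W\subseteq V(G_n)$ with $|W|\ge n$ contains a PSD fort.

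The genuine gap sits exactly where the extra $+1$ over the paper's bound must be earned. You verify the fort property only for three special shapes of $W$: sets containing $\binom{S}{2}$ with $|S|\ge 5$, sets containing a $4$-matching, and a single star-plus-one-edge example; both fort families check out, but the trichotomy is far from exhaustive and the remaining ``hybrid'' case is precisely the open combinatorial problem. For instance, any $n$-element subset of $\left\{\, v\in \binom{[n]}{2} : v\cap\{1,2\}\ne\varnothing \,\right\}$ contains no $4$-matching (matchings inside this family have size at most $2$), contains no $\binom{S}{2}$ with $|S|\ge 4$, and is not of your single-star form; such two-centre configurations, and many trimmed variants of size exactly $n$, all need bespoke forts, and you explicitly defer this (``expected to be the technical heart''). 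Consequently the lower bound for $n\ge 8$ --- and with it the claimed equality $Z_0(G_n)=Z_1(G_n)$ in that range --- is not established. The small cases $n\in\{5,6,7\}$ by exhaustive computation are acceptable (and are also how the paper handles them), and your closing observation that, once the lower bound is secured, the sandwich $Z_0(G_n)\le Z_1(G_n)\le\binom{n-1}{2}$ settles the $q=1$ claims is fine; but as written the proposal is a plan with its central lemma unproved, not a proof.
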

	
	To attempt this, we consider the following lemmas. 
	\begin{lemma}
		If $G_n[V(G_n) \setminus S]$ has $k\geq 4$ components for $S \subseteq V(G_n)$, then each component is an isolated vertex.\label{lem:kneser1}
	\end{lemma}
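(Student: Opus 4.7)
The plan is to argue by contradiction: assume some component $C$ of $G_n[V(G_n)\setminus S]$ has at least two vertices and show that then $G_n[V(G_n)\setminus S]$ can have at most three components in total, contradicting $k \geq 4$.

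First I would pick an edge of $C$, which exists since $C$ is connected and has more than one vertex. Call the endpoints $\{a,b\}$ and $\{c,d\}$; these are disjoint $2$-subsets of $[n]$, so $a,b,c,d$ are four distinct elements. Now any other component of $G_n[V(G_n)\setminus S]$ consists of vertices $e \in V(G_n) \setminus (V(C)\cup S)$, and each such $e$ must be non-adjacent in $G_n$ to every vertex of $V(C)$; otherwise $e$ would be connected to $C$ and hence lie in $C$, contradicting $e \notin V(C)$. In particular, $e$ is non-adjacent to both $\{a,b\}$ and $\{c,d\}$, which in the Kneser graph $K(n,2)$ means $e \cap \{a,b\} \neq \emptyset$ and $e \cap \{c,d\} \neq \emptyset$. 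Since $|e|=2$, this forces $e$ to be one of the four \emph{transversal} pairs $\{a,c\}, \{a,d\}, \{b,c\}, \{b,d\}$.

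Next I would examine the induced subgraph on these four transversal pairs. A direct check of disjointness shows the only edges among them are $\{a,c\}\{b,d\}$ and $\{a,d\}\{b,c\}$, i.e., the induced subgraph on all four is a perfect matching on two edges. Therefore $G_n$ restricted to any subset of these four vertices has at most two connected components, regardless of which of them lie in $S$. Combined with the component $C$ itself, this yields at most $1+2 = 3$ components of $G_n[V(G_n)\setminus S]$, contradicting $k\geq 4$.

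The only delicate point — and where I would be most careful writing it up — is justifying that every vertex $e$ outside $V(C)\cup S$ is non-adjacent to \emph{all} of $V(C)$, not merely to the two seed vertices $\{a,b\}$ and $\{c,d\}$. This is immediate from the definition of a connected component (any neighbour of a vertex of $C$ not already in $S$ must itself lie in $C$), so no additional structural analysis of $C$ is required. Everything else is a short combinatorial verification on four specific pairs.
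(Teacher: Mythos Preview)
Your proof is correct and follows essentially the same approach as the paper: assume some component contains an edge $\{a,b\}\{c,d\}$, observe that every vertex in any other component must meet both $\{a,b\}$ and $\{c,d\}$ and hence lie among the four transversal pairs, and note that these four pairs induce at most two components, giving at most three components in total. Your write-up is in fact a bit more careful than the paper's (you spell out why the four transversal pairs form a perfect matching and flag the point about non-adjacency to all of $V(C)$), but the argument is the same.
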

	\begin{proof}
		Suppose that there are $k\geq 4$ components in $G_n[V(G_n) \setminus S]$. Let $C_1,C_2,\ldots,C_k$ be these $k$ components. Without loss of generality, assume that the edge $a$ consisting of the vertices $\{1,2\}$ and $\{3,4\}$ belongs to $C_1$. For any vertex $\{x,y\} \in C_i$ for  $i\geq 2$, we must have that $ \{x,y\}$ is not incident to the edge $a$. Therefore, $\{x,y\}$ is one of $\{1,3\}, \{1,4\}, \{2,3\}$ and $\{2,4\}$. However, the vertices $\{1,3\}, \{1,4\}, \{2,3\}$ and $\{2,4\}$ only produce at most two components. With the component $C_1$, we have at most $k\leq 3$ components, which is a contradiction. Hence, $C_i$ cannot have an edge, for any $i\in \{1,2,\ldots,k\}$.
	\end{proof}

	\begin{lemma}
		If $G_n[V(G_n) \setminus S]$ has at most three components, $C_1,\ C_2, \mbox{ and }C_3$, and if $C_1$ has at least three vertices, then $|C_2| = |C_3| = 1$.\label{lem:kneser2}
	\end{lemma}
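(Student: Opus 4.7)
The plan is to exploit the Kneser adjacency rule (two $2$-subsets of $[n]$ are adjacent in $G_n=K(n,2)$ iff they are disjoint) together with the fact that the components $C_2,C_3$ are separated from $C_1$ in $G_n$. Since $|C_1|\ge 3$ and $C_1$ is connected, $C_1$ contains an edge; without loss of generality, take $a=\{1,2\}$ and $b=\{3,4\}$ to be an adjacent pair in $C_1$. Any vertex $v\in C_2\cup C_3$ is non-adjacent in $G_n$ to every vertex of $C_1$, so in particular $v$ meets both $a$ and $b$; thus $v$ must lie in $X:=\{\{1,3\},\{1,4\},\{2,3\},\{2,4\}\}$. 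The induced subgraph $G_n[X]$ is a perfect matching on four vertices, with edges $\{1,3\}\{2,4\}$ and $\{1,4\}\{2,3\}$.

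The core of the argument is to prove that at most two vertices of $X$ can lie in $C_2\cup C_3$. Suppose for contradiction that three vertices of $X$ belong to $C_2\cup C_3$; by the obvious symmetry of $X$ assume these are $\{1,3\},\{1,4\},\{2,3\}$. Every $c\in C_1$ must intersect each of these three $2$-sets. A short case split on whether $1\in c$ (in which case $c=\{1,j\}$ with $j\in\{2,3\}$) or $1\notin c$ (forcing both $3\in c$ and $4\in c$, so $c=\{3,4\}$) yields $c\in\{\{1,2\},\{1,3\},\{3,4\}\}$. Since $\{1,3\}$ is already in $C_2\cup C_3$ and the $C_i$'s are disjoint, we conclude $c\in\{a,b\}$, giving $|C_1|\le 2$, contradicting $|C_1|\ge 3$. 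The same argument applies to any choice of three vertices of $X$.

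To finish: having three components forces both $C_2$ and $C_3$ to be non-empty, so $|C_2\cup C_3|\ge 2$, and combined with the previous bound we get $|C_2\cup C_3|=2$. The two vertices involved lie in distinct components, so they must be non-adjacent in $G_n$; equivalently, they form one of the two non-matching pairs in $X$ (such as $\{1,3\},\{1,4\}$), and each of $C_2,C_3$ is a single vertex. The main obstacle is really the finite hitting-set enumeration in the middle paragraph: once one recognizes that every $v\in C_2\cup C_3$ is a $2$-element transversal of the family of $2$-sets forming $C_1$ and that $C_1$ contains the disjoint pair $a,b$, the $X$ restriction is immediate, and the only subtle point is ruling out the configuration in which $C_2\cup C_3$ contains a $P_3$-like triple within the matching $G_n[X]$.
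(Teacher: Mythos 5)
Your proof is correct and complete. It begins exactly where the paper's own ``proof'' does --- fixing an edge $a=\{1,2\}$, $b=\{3,4\}$ inside $C_1$ --- but the paper's proof of this lemma is in fact only that single set-up sentence and never carries out the argument; your write-up supplies the missing content: every vertex of $C_2\cup C_3$ must intersect both $a$ and $b$ and hence lies in the four-element transversal set $X$, and the hitting-set enumeration showing that three vertices of $X$ outside $C_1$ would force $C_1\subseteq\{a,b\}$ is exactly the right way to cap $|C_2\cup C_3|$ at $2$. The only stylistic remark is that the appeal to ``the obvious symmetry of $X$'' deserves a half-sentence (the group generated by the transpositions $(1\,2)$ and $(3\,4)$ fixes $\{a,b\}$ and acts transitively on the $3$-subsets of $X$), but this is routine and does not affect correctness.
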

	\begin{proof}
		Without loss of generality, suppose that the edge $a = \{ \{1,2\},\{3,4\} \}$.
	\end{proof}

	\begin{lemma}
		If $G_n[V(G_n) \setminus S]$ has at most three components, $C_1,\ C_2, \mbox{ and }C_3$, and if $C_1$ has at least three vertices, then the subgraph induced by $C_1$ must be a star.\label{lem:kneser4}
	\end{lemma}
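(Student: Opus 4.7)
The plan is to combine Lemma~\ref{lem:kneser2} with the extra structural information that the isolated components $C_2$ and $C_3$ provide, and then to read off $G_n[C_1]$ from a tight pair-intersection condition. To start, I would apply Lemma~\ref{lem:kneser2} to conclude that $|C_2|=|C_3|=1$, and write $C_2=\{u_2\}$, $C_3=\{u_3\}$. Choosing any edge $\{a,b\}$ of $C_1$ (which exists because $C_1$ is connected with at least three vertices) and relabeling so that $a=\{1,2\}$ and $b=\{3,4\}$, I see that $u_2$ and $u_3$ each intersect both $\{1,2\}$ and $\{3,4\}$ (else they would be adjacent to $a$ or $b$ and hence lie in $C_1$), so $u_2,u_3\in\{\{1,3\},\{1,4\},\{2,3\},\{2,4\}\}$. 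Moreover, $u_2$ and $u_3$ must share an element of $\{1,2,3,4\}$, for otherwise they would be adjacent and thus in the same component. By the natural symmetries of the edge $\{a,b\}$ (swapping $1\leftrightarrow 2$, swapping $3\leftrightarrow 4$, and swapping $a$ and $b$) I may assume this shared element is $1$, giving $u_2=\{1,3\}$ and $u_3=\{1,4\}$.

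The key step is then to characterize the possible vertices of $C_1$. Any $w\in C_1$ is non-adjacent to both $u_2$ and $u_3$, so $w$ must intersect each of $\{1,3\}$ and $\{1,4\}$. I would split into two cases: if $1\in w$, then $w=\{1,x\}$ for some $x$, and since $w\notin\{u_2,u_3\}$ I must have $x\in\{2,5,6,\ldots,n\}$; if $1\notin w$, then the two intersection conditions force $3\in w$ and $4\in w$, so $w=\{3,4\}$. This yields
\[
C_1\subseteq \{\{3,4\}\}\cup\bigl\{\{1,x\}: x\in\{2,5,6,\ldots,n\}\bigr\},
\]
and $\{1,2\},\{3,4\}\in C_1$ by construction.

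To finish, I would read off the induced edges in $G_n[C_1]$. Any two pairs of the form $\{1,x\}$ and $\{1,y\}$ share the element $1$ and are therefore non-adjacent in $G_n$, while $\{3,4\}$ is disjoint from each $\{1,x\}$ with $x\in\{2,5,6,\ldots,n\}$ and hence adjacent to every such pair. Consequently $\{3,4\}$ is the unique neighbor (within $C_1$) of every other vertex of $C_1$, which is exactly the statement that $G_n[C_1]$ is a star centered at $\{3,4\}$. The principal obstacle is just the careful handling of the four symmetric positions for the shared element of $u_2$ and $u_3$; once that WLOG reduction is made rigorous, the pair-intersection constraints imposed by the two isolated vertices dictate the star structure essentially immediately.
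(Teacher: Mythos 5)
Your argument is correct, and it reaches the conclusion by a slightly different mechanism than the paper. Both proofs start the same way: invoke Lemma~\ref{lem:kneser2} to reduce to two singleton components, fix an edge $\{1,2\}\sim\{3,4\}$ in $C_1$, and observe that each isolated vertex must meet both $\{1,2\}$ and $\{3,4\}$. Where you diverge is in how the two singletons are pinned down and how the star structure is extracted. The paper forms the coclique $\{x,y,z,\{3,4\}\}$ (with $z$ a third vertex of $C_1$) and appeals to the Hilton--Milner theorem to place this intersecting family inside a maximum star, concluding $x=\{1,3\}$, $y=\{2,3\}$ and then that every further vertex of $C_1$ contains $3$ (so the star is centred at $\{1,2\}$ in their normalization). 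You instead note directly that the two singletons, being in distinct components, must intersect each other, which together with cross-intersection with the fixed edge forces them (up to the symmetries of that edge) to be $\{1,3\}$ and $\{1,4\}$; the non-adjacency of every $w\in C_1$ to both singletons then gives $w=\{3,4\}$ or $w=\{1,x\}$, and the star centred at $\{3,4\}$ falls out immediately. The two normalizations are equivalent under relabelling. Your route is more elementary and self-contained -- it replaces the Hilton--Milner citation and the paper's somewhat terse ``it is easy to see'' step with an explicit two-line case analysis -- at the cost of being specific to $2$-sets, whereas the paper's coclique/Hilton--Milner framing is the one that would generalize to $K(n,r)$ for larger $r$. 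For the lemma as stated, your version is arguably the cleaner write-up.
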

	\begin{proof}
		By Lemma~\ref{lem:kneser2}, $G_n[V(G_n) \setminus S]$ has three components, two of which are of size $1$. Let $x$ and $y$ be the isolated vertices components. Without loss of generality, assume that the edge joining $\{1,2\}$ and $\{3,4\}$ is in $C_1$. Since $x\cap \{1,2\}\neq \varnothing$ and $x\cap \{3,4\} \neq \varnothing$, any third vertex $z \in C_1$ must be such that $z\cap x \neq \varnothing$ and $z$ is adjacent to $\{1,2\}$ or $\{3,4\}$. If $z$ is adjacent to both $\{1,2\}$ and $\{3,4\}$, then $z\cap \{1,2,3,4\} = \varnothing$, so $z\cap x = \varnothing$. That is, $x$ is adjacent to $z$. Thus, $z$ is only adjacent to one of $\{1,2\}$ or $\{3,4\}$. Assume that $z$ is adjacent to $\{1,2\}$. Then, $\{x,y,z,\{3,4\}\}$ is a coclique (i.e., intersecting) which is contained in a maximum intersecting family (due to the Hilton-Milner theorem, see \cite{GM, hilton1967some}). Without loss of generality, we can assume that $\{x,y,z,\{3,4\}\} \subset $ $\left\{ \{3,x\} \mid x\in \{1,2,\ldots,n\}, \ x\neq 3 \right\}$. Then, it is easy to see that $x = \{1,3\}$ and $y = \{2,3\}$.
		
		If $z^\prime \in C_1$ is not equal to the other three vertices, then $z^\prime \cap x\neq \varnothing$,  $z^\prime \cap y \neq \varnothing$ and $z^\prime \neq \{1,2\}$, so $3\in z^\prime$. This proves that $C_1$ must be a star.
	\end{proof}
	\begin{obs}
		Let $i\in \{1,2,\ldots,n\}$. If $x\in V(G_n)$ such that $i\not \in x$, then $x$ is adjacent to $n-3$ vertices containing $i$.\label{lem:kneser5}
	\end{obs}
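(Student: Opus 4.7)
The plan is a direct counting argument using the definition of the Kneser graph $K(n,2)$. Recall that the vertices of $G_n$ are the $2$-element subsets of $\{1,2,\ldots,n\}$ and two vertices are adjacent exactly when they are disjoint.

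Fix $i \in \{1,2,\ldots,n\}$ and let $x \in V(G_n)$ with $i \notin x$; write $x = \{a,b\}$ where $a,b \neq i$. A vertex $y$ of $G_n$ that contains $i$ has the form $y = \{i,j\}$ for some $j \in \{1,\ldots,n\} \setminus \{i\}$. For such a $y$ to be adjacent to $x$ we need $y \cap x = \varnothing$, which forces $j \neq a$ and $j \neq b$ (the condition $i \notin x$ already ensures $i \notin x$).

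Thus the admissible values of $j$ are precisely the elements of $\{1,\ldots,n\} \setminus \{i,a,b\}$, a set of size $n-3$ since $i,a,b$ are three distinct elements. This gives exactly $n-3$ neighbours of $x$ containing $i$, which is the claimed count.

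There is essentially no obstacle here: the statement is a one-line enumeration once the adjacency rule of $K(n,2)$ is written out. The only thing to be careful about is remembering that $i$, $a$, and $b$ are pairwise distinct (guaranteed by $i \notin x$ together with $x$ being a $2$-subset), so that the excluded set $\{i,a,b\}$ really has three elements and not fewer.
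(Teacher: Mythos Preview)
Your proof is correct and is exactly the natural counting argument; the paper records this as an observation without proof, so your approach matches what the authors implicitly had in mind.
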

	
	\begin{lemma}
		If $G_n[V(G_n) \setminus S]$ is a coclique of size at least $4$, then there exists $i$ such that $S \subset \left\{ \{x,y\} \mid x,y\in \{1,2,\ldots,n\}\setminus \{i\}, x\neq y \right\}$. Equivalently, $G_n[S] $ contains a subgraph which is a copy of $G_{n-1}$.\label{lem:kneser3}
	\end{lemma}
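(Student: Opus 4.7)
The plan is to recast the coclique hypothesis as an intersecting family statement on $2$-subsets, and then show that any intersecting family of $2$-subsets of $\{1,2,\ldots,n\}$ of size at least four must be contained in some star $\{\{i,x\} : x \in \{1,2,\ldots,n\}\setminus\{i\}\}$. Recall that cocliques in the Kneser graph $G_n = K(n,2)$ are precisely the intersecting families of $2$-subsets, and that containing $\mathcal{F} := V(G_n)\setminus S$ in a star through $i$ is equivalent to $S$ containing every $2$-subset of $\{1,2,\ldots,n\}\setminus\{i\}$; the induced subgraph on that latter set is $K(n-1,2) \cong G_{n-1}$, which yields the stated equivalent reformulation.

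I would begin by fixing two members $A, B \in \mathcal{F}$. Since $A \cap B \neq \varnothing$, one may write $A = \{i, a\}$ and $B = \{i, b\}$ with $a \neq b$, where $i$ is their common element. Any third member $C \in \mathcal{F}$ must intersect both $A$ and $B$, forcing either $i \in C$ or $C = \{a,b\}$. Now introduce a fourth member $D \in \mathcal{F}$, whose existence is guaranteed by the hypothesis $|\mathcal{F}| \geq 4$.

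The core of the argument is the case analysis on $D$. In the first case, the three chosen members all lie in the star through $i$, say $\{i,a\}, \{i,b\}, \{i,c\}$; a putative $D$ with $i \notin D$ would need to meet each of these pairs in a non-$i$ coordinate, forcing $\{a,b,c\} \subseteq D$ and contradicting $|D|=2$, so $i \in D$ and $\mathcal{F}$ remains inside the star. In the second case, $\mathcal{F}$ contains the triangle $\{\{i,a\}, \{i,b\}, \{a,b\}\}$, and a short check of the sub-cases ($i \in D$; or $a \in D$ and $i \notin D$; or $b \in D$ and $i \notin D$) shows each option forces $D$ to coincide with one of the existing three pairs; so this configuration cannot support $|\mathcal{F}|=4$. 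Hence the second case is ruled out and $\mathcal{F}$ lies entirely inside the star through $i$.

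The main (and essentially only) obstacle is the bookkeeping for the fourth element in the triangle configuration, but it is self-contained and short. As an alternative, the same conclusion is immediate from the Hilton--Milner theorem (already invoked in the proof of Lemma~\ref{lem:kneser4}) applied with $k=2$, which gives the maximum size of a non-star intersecting family of $2$-subsets as $3$; so the hypothesis $|\mathcal{F}| \geq 4$ forces $\mathcal{F}$ into a sub-star, and the lemma follows.
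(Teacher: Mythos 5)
Your proof is correct. Note first that the paper's own proof is precisely your ``alternative'' route: it cites the Hilton--Milner theorem to conclude that a maximal coclique of $K(n,2)$ which is not a star has at most three vertices, so a coclique of size at least four lies inside a star $\{\{i,x\}: x\neq i\}$, whence $S$ contains every pair avoiding $i$ and $G_n[S]$ contains a copy of $G_{n-1}$. Your primary argument is a genuinely different, self-contained route: fixing $A=\{i,a\}$, $B=\{i,b\}$, observing that any further member either contains $i$ or equals $\{a,b\}$, and checking that the triangle $\{\{i,a\},\{i,b\},\{a,b\}\}$ admits no fourth intersecting pair while three distinct star members $\{i,a\},\{i,b\},\{i,c\}$ force every further member to contain $i$. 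This is in effect a direct proof of the $k=2$ case of Hilton--Milner; it buys independence from the extremal-set-theory citation at the cost of a few lines of bookkeeping, whereas the paper's version is shorter but leans on a theorem it already invokes in Lemma~\ref{lem:kneser4}. One small remark on the statement itself: as printed it asserts $S \subset \left\{ \{x,y\} \mid x,y \neq i \right\}$, but the containment consistent with the ``equivalently'' clause (and with both proofs) is the reverse one, $S \supseteq \left\{ \{x,y\} \mid x,y \neq i \right\}$; you read it the intended way.
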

	\begin{proof}
		Let $S$ be a subset of vertices whose removal make $G_n$ be disconnected into isolated vertices of size at least $4$. Let $H = G_n[V(G_n) \setminus S]$. Since there are at least $4$ vertices in $H$, $H$ cannot be a coclique from a copy of $K(4,2)$. Moreover, the Hilton-Milner theorem \cite{GM, hilton1967some} asserts that a maximal cocliques which is not maximum in $G_n = K(n,2)$ has size at most $3$. Thus, the vertices of $H$ are contained in a maximum coclique. Suppose without loss of generality that $H$ consists of the subset of vertices containing $n$. Therefore, $G_n[S]$ contains a copy of $G_{n-1}$. This completes the proof.
	\end{proof}
	
	Given a graph $G = (V,E)$, we let $\tw(G)$ be the \emph{treewidth} of $G$ (see \cite{barioli2013parameters} for more information).
	\begin{lemma}[\cite{barioli2013parameters}]
		Let $G = (V,E)$ be a graph. Then, $\tw(G) \leq Z_0(G)$.\label{lem:tw-z0-lower-bound}
	\end{lemma}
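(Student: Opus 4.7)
The plan is to construct a tree decomposition of $G$ of width at most $Z_0(G)$ directly from an optimal positive semidefinite zero forcing set. Let $k = Z_0(G)$ and fix a PSD zero forcing set $B$ with $|B| = k$. First I would run the PSD forcing process starting from $B$ and record, for each $v \notin B$, the unique vertex $p(v)$ that forced it; this defines a \emph{forcing forest} $F$ on $V(G)$ whose roots are the elements of $B$. When a single coloured vertex simultaneously forces one white vertex in each of several components, assign it as the parent of each of those forced vertices. Let $T$ be the rooted tree obtained from $F$ by attaching a super-root $r$ to the elements of $B$.

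The bags will be indexed by the nodes of $T$. Set $X_r = B$, and for each $v \in V(G)$ (viewed as a node of $T$) let $X_v$ consist of $v$ together with every ancestor $u$ of $v$ in $F$ which still has at least one uncoloured neighbor in the same white connected component as $v$ at the moment $v$ is about to be forced. I would then verify the three tree-decomposition axioms. Axiom (i), that every vertex is in some bag, is immediate from $v \in X_v$. For axiom (ii), given an edge $\{u,v\} \in E(G)$ with $u$ coloured before $v$, at the instant $v$ is forced, $u$ is a coloured vertex having $v$ as a white neighbor; a short argument using the PSD colour change rule and the choice of parents shows that $u$ must be an ancestor of $v$ in $F$, so $u \in X_v$. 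Axiom (iii), the running intersection property, follows from the monotonicity of the frontier: once an ancestor $u$ has exhausted its white neighbors in $v$'s component, it never returns to the bag of any descendant of $v$.

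The main obstacle, and the crux of the argument, is the bag-size bound $|X_v| \leq k+1$, which yields width at most $k$. I would prove the following invariant of PSD forcing by induction on the number of forcing moves: at every stage of the process, and for every white connected component $W$, the set of coloured vertices having at least one neighbor in $W$ has size at most $k$. The base case is trivial since initially the total frontier has size $|B|=k$. The inductive step is the delicate point: a force may \emph{split} a white component into several smaller ones, and although summing the new frontiers over the pieces may exceed $k$, the frontier on any single new piece is a subset of the old frontier and hence is still bounded by $k$. Because the bag $X_v$ only involves the single component containing $v$, the per-component bound is exactly what is needed. Combining $v$ with its at most $k$ active ancestors gives $|X_v| \leq k+1$, so the width of the decomposition is at most $k = Z_0(G)$, completing the proof.
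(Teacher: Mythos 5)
This lemma is quoted from \cite{barioli2013parameters}; the paper gives no proof of its own, so your argument can only be judged on its internal correctness. Your key invariant is right and is indeed the heart of any proof along these lines: serializing the forces, when $u$ forces $v$ inside the white component $W$, the vertex $v$ is the \emph{only} white neighbour of $u$ in $W$, so for every component $W_i$ of $W\setminus\{v\}$ one has $\mathrm{frontier}(W_i)\subseteq(\mathrm{frontier}(W)\cup\{v\})\setminus\{u\}$, and the per-component frontier stays of size at most $k=Z_0(G)$. The gap is in the assembly of the tree decomposition, specifically in the ``short argument'' for axiom (ii). It is simply false that a previously coloured neighbour $u$ of $v$ must be an ancestor of $v$ in the forcing forest $F$. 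Take $G=C_4$ with edges $12,23,34,41$ and $B=\{1,2\}$, so $k=Z_0(C_4)=2$. The unique white component is $\{3,4\}$; vertex $2$ forces $3$ and vertex $1$ forces $4$, giving $p(3)=2$ and $p(4)=1$. Then $X_3=\{2,3\}$, $X_4=\{1,4\}$, $X_r=\{1,2\}$, and the edge $\{3,4\}$ lies in no bag. No choice of forcers repairs this: if instead $3$ is forced first and then $p(4)=3$, the edge $\{1,4\}$ becomes the uncovered one, because $4$ always has two coloured neighbours at the moment it is forced and only one branch of $F$ can contain both.

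The repair is to put the \emph{entire} coloured frontier of $v$'s component into the bag, $X_v=\{v\}\cup N(W_v)\cap B_v$ where $B_v$ is the coloured set just before $v$ is forced; your invariant then still gives $|X_v|\le k+1$, and every edge $\{u,v\}$ with $u$ coloured first is automatically covered by $X_v$. But this change breaks your justification of the running intersection property, which leaned on the ancestor relation: a frontier vertex $u\in X_v$ need not lie on the root-to-$v$ path, so you must argue separately that the nodes whose bags contain $u$ form a subtree --- typically by choosing the decomposition tree so that the parent of $v$'s node is the node of the most recently coloured vertex in $N(W_v)$ rather than the vertex that happened to perform the force. As written, the proof does not go through; the component-frontier bound is the right idea, but both the edge-coverage and the connectivity axioms need the full-frontier bags and a correspondingly rebuilt tree.
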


	\begin{lemma}[\cite{harvey2014treewidth}] \label{twh}
		For any $n\geq 6$, $\tw(G_n) = \binom{n}{2}-1$.
	\end{lemma}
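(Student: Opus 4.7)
The lemma is quoted from Harvey and Wood, whose 2014 paper establishes sharp treewidth values for Kneser graphs, so the plan I would follow is theirs. The argument splits into a matching upper and lower bound, with essentially all of the substantive work in the lower bound.

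For the upper bound, the plan is to exhibit an explicit tree decomposition of $G_n$ whose maximum bag size matches the claim. A natural template is to use bags indexed by elements of $[n]$: the bag associated to $i\in [n]$ contains the star $\{\{i,j\} : j\neq i\}$ together with enough additional $2$-subsets to carry the edges of $G_n$ incident with that star. Arranging these bags along a carefully chosen tree and verifying the running-intersection property then gives the desired width.

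For the lower bound I would invoke the bramble--treewidth duality of Seymour and Thomas: $\tw(G)+1$ equals the maximum order of a bramble in $G$, where the order of a bramble is the minimum size of a vertex set that hits every bramble element. The plan is to construct in $G_n$ a bramble of the required order. Since a star in $G_n$ is an independent set (any two $2$-subsets containing a common element are non-adjacent in the Kneser graph), the stars are not themselves admissible bramble elements; each must be enriched with one or two disjoint $2$-subsets so that the resulting subgraph is connected. One checks that these enriched stars pairwise touch, either by sharing a vertex or by being joined through a disjoint-pair edge of $G_n$.

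The main obstacle, by some margin, is bounding the order of this bramble from below, i.e.\ showing every hitting set is large. This is the Erd\H{o}s--Ko--Rado / Hilton--Milner-style extremal set theory content of Harvey--Wood; Hilton--Milner is already used in Lemmas~\ref{lem:kneser3} and~\ref{lem:kneser4}, and a similar sharp form is what is needed here. The hypothesis $n\geq 6$ is exactly the threshold at which the EKR-type counting becomes tight. Because this combinatorial argument is self-contained and technical, the present paper appropriately cites Harvey--Wood rather than reconstructing it, and so would I.
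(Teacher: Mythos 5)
You are not really being measured against a proof in the paper here: the paper offers none, since the lemma is imported verbatim from Harvey and Wood. The only substantive thing to check is whether the statement has been transcribed correctly, and it has not — as printed the lemma is false, so your proposal, which sets out to certify the printed value, cannot be completed. The graph $G_n=K(n,2)$ has $\binom{n}{2}$ vertices and is not complete (e.g.\ $\{1,2\}$ and $\{1,3\}$ intersect and are therefore non-adjacent), and every non-complete graph on $N$ vertices has treewidth at most $N-2$: for non-adjacent $u,v$ the two bags $V\setminus\{u\}$ and $V\setminus\{v\}$ form a tree decomposition of width $N-2$. Hence $\tw(G_n)\le\binom{n}{2}-2$, so no bramble of order $\binom{n}{2}$ can exist and the lower-bound half of your plan is unachievable at the claimed value; the ``main obstacle'' you identify is not merely hard but impossible. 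The same contradiction is visible inside the paper itself: Theorem~\ref{lem:tw} proves $Z_2(G_n)\le\binom{n-1}{2}$, whereas the lemma as stated combined with Lemma~\ref{lem:tw-z0-lower-bound} would give $\binom{n}{2}-1\le\tw(G_n)\le Z_0(G_n)\le Z_2(G_n)$.

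What Harvey and Wood actually prove for $k=2$ and $n$ in this range is $\tw(K(n,2))=\binom{n}{2}-n=\binom{n-1}{2}-1$, i.e.\ one less than the number of vertices lying outside a maximum independent set (an EKR star); this corrected value is exactly what Section~4.4 needs, since together with $\tw\le Z_0$ it yields $Z_0(G_n)\ge\binom{n-1}{2}-1$, sandwiching against the upper bound $\binom{n-1}{2}$. Your high-level account of their method — an explicit tree decomposition for the upper bound and an EKR/Hilton--Milner extremal argument for the lower bound — is a fair description of the cited proof once the target is corrected, but a sketch that ``matches the claim'' as printed is matching an impossible claim; a correct write-up should begin by catching the transcription error. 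The fix is to restate the lemma with $\binom{n-1}{2}-1$ and to propagate the same correction to the inequality $Z_0(G_n)\ge\binom{n}{2}-1$ asserted in the proof of Theorem~\ref{lem:tw}.
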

	
	\begin{theorem}
		Let $n\geq 5$ and $q\geq 0$. We have
		\begin{align*}
			Z_q(G_n) =
			\begin{cases}
				\binom{n}{2}-6  \hspace*{4cm} &\mbox{ for } n \in \{5,6,7\} \mbox{ and } q=0\\
				\binom{n}{2}-5 &\mbox{ for } n \in \{5,6\} \mbox{ and } q = 1\\
				15 & \mbox{ for } q = 1 \mbox{ and } n = 7\\
				 \binom{n-1}{2}-1 \mbox{ or } \binom{n-1}{2}  & \mbox{ for } q\in \{0,1,2\},\ n\geq 8.\\
			\end{cases}
		\end{align*}\label{lem:tw}
	\end{theorem}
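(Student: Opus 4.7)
The theorem is a collection of case-by-case evaluations, so the plan is to split according to whether $n$ is small or large and then handle each $q$ separately. For $n=5$ (Petersen), both $Z_0(G_5)=4$ and $Z_1(G_5)=5$ are essentially the preceding Example together with the inequality $Z_1 \le Z(G_5)=5$. For $n \in \{6,7\}$ and $q=0$, I would prove $Z_0(G_n) \le \binom{n}{2}-6$ by an explicit $Z_0$-forcing set construction in which the complement of the set sits inside a maximum intersecting family (Hilton--Milner), and then match this from below using Lemmas \ref{lem:kneser1}--\ref{lem:kneser4} to argue that any smaller set leaves a subgraph with too many or too large components to be forceable in the $Z_0$-game. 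For $q=1$ and $n \in \{5,6,7\}$, the bound $Z_1 \le Z$ handles the upper side, while the spectral bound $Z_1(G_n) \ge \binom{n-1}{2}-1$ obtained earlier, together with a short case analysis on the oracle's optimal response, pins down the claimed values; in particular for $n=7$ the bound $Z_1(G_7) \ge 15$ is obtained by ruling out the value $14$ via the structural lemmas.

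For the generic case $n \ge 8$ and $q \in \{0,1,2\}$, the plan is to prove the sandwich
\[
\binom{n-1}{2}-1 \;\le\; Z_q(G_n) \;\le\; \binom{n-1}{2}.
\]
For the upper bound, place $\binom{n-1}{2}$ tokens on all $2$-subsets of $\{1,\ldots,n-1\}$, leaving the $n-1$ pairwise-nonadjacent vertices $\{\{i,n\} : 1 \le i \le n-1\}$ uncoloured. On each turn the player invokes operation 3 on exactly $q+1$ of these components. For any oracle response $T$ of size $k \le q+1 \le 3$, there exists a coloured pair $\{a,b\} \subseteq \{1,\ldots,n-1\}$ whose intersection with $\{i : \{i,n\} \in T\}$ has size exactly $k-1$; such $\{a,b\}$ has a unique uncoloured neighbour in the restricted subgraph and performs the force. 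After the game is driven down to $q$ uncoloured vertices, the plain colour change rule finishes (for $q \le 2$ a suitable forcing pair always exists). The lower bound $Z_q(G_n) \ge \binom{n-1}{2}-1$ follows from $Z_0 \le Z_q$ and either the treewidth bound $Z_0(G_n) \ge \tw(G_n)$ (Lemmas \ref{lem:tw-z0-lower-bound} and \ref{twh}) or the spectral bound $Z_1(G_n) \ge \binom{n-1}{2}-1$ already established.

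The main technical obstacle is the oracle-robustness of the upper-bound strategy: one must exhibit a forcing pair for every possible oracle response of size $\le q+1$, and then verify that iterating preserves enough structure to keep the game moving until the tail is short enough to be handled by operation 2. The pair-counting argument requires $n \ge q+2$, which is already implied by $n \ge 8 > 4 \ge q+2$. A secondary, unresolved difficulty is pinning down the exact value within the two-element window $\{\binom{n-1}{2}-1, \binom{n-1}{2}\}$; the theorem deliberately allows both, and closing the gap would require a sharper spectral construction (for the lower bound) or a more efficient oracle-aware strategy (for the upper bound).
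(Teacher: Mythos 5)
Your proposal is correct and follows essentially the same route as the paper: tokens placed on the copy of $G_{n-1}$, a case analysis over oracle responses of size $1$, $2$, or $3$ (your uniform ``coloured pair meeting $\{i:\{i,n\}\in T\}$ in exactly $k-1$ points'' is precisely the paper's three cases), and the treewidth and spectral lower bounds combined with monotonicity, with the small cases handled by direct computation. The only discrepancy is that the paper's statement of the treewidth lemma carries a typo ($\binom{n}{2}-1$ rather than $\binom{n-1}{2}-1$); you have implicitly used the corrected value, which is what the paper's proof intends.
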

	\begin{proof}
		For $q \in \{0,1\}$ and $n\in \{5,6,7\}$, we can directly compute the values of $Z_q(G_n)$.
		For any $n\geq 8$,  we can apply Lemmas ~\ref{twh} and ~\ref{lem:tw-z0-lower-bound}, to conclude that $Z_0(G_n) \geq \binom{n}{2}-1$. Now, we prove that $Z_2(G_n) \leq \binom{n-1}{2}$. 
		Let $S$ be the collection of all pairs that contain $n$. Now, place $\binom{n-1}{2}$ tokens on the graph $G_{n-1}$. We claim that these tokens are enough to force all vertices in the $Z_2$-game. Since $S$ is a maximum coclique of $G_n$, by Observation \ref{lem:kneser5}, we know that no vertex in $G_{n-1}$ can force any uncoloured vertex (all uncoloured vertices are in $S$). Hence, the $Z_2$-game must be played. Let $H = G_n \left[ V(G_n) \setminus V(G_{n-1}) \right]$ be the graph induced by the vertices in $S$. It is trivial that $H$ is a union of $n-1$ isolated vertices. In the $Z_2$-game, we must provide at least $3$ vertices to the oracle. We consider the following cases depending on the response of the oracle.
		\begin{enumerate}
			\item {\it Case~1. The oracle returns a vertex of $H$}. Then, it is trivial that we can force the returned vertex.
			\item {\it Case~2. The oracle returns two vertices of $H$}. Assume that these vertices are $\{a,n\}$ and $\{b,n\}$. For any $x\not\in \{a,b,n\}$, the vertex $\{a,x\}$ of the Kneser graph $G_{n-1}$ is adjacent to $\{b,n\}$ and not adjacent to $\{a,n\}$. Similarly, $\{b,x\}$ is adjacent to $\{a,n\}$ and not adjacent to $\{b,n\}$. Therefore, we can force these two returned vertices.
			\item {\it Case~3. The oracle returns three vertices of $H$}. Assume that the returned vertices are $\{a,n\},\ \{b,n\}$ and $\{c,n\}$. To prove this, we need to show that for any vertex $V\in \left\{ \{a,n\},\ \{b,n\},\ \{c,n\} \right\}$, we can find a vertex in $G_{n-1}$ that is adjacent to only $V$ and not the other returned vertices. It is not hard to see that for any vertex $V\in \left\{ \{a,n\},\ \{b,n\},\ \{c,n\} \right\}$, there exists a unique vertex in $W\in \{\{a,b\},\ \{a,c\},\ \{b,c\} \}$ such that $W$ is only adjacent to $V$ and not the other two returned vertices. Therefore, we can always force the three returned vertices.
		\end{enumerate}
	
		In each round of the $Z_2$-game on $G_n$, we hand at most three components (which are isolated vertices). Then, no matter how many components are returned by the oracle, we can always force these returned vertices, according the above cases. Consequently, $Z_2(G_n) \leq \binom{n-1}{2}$.
		
	\end{proof}

	\begin{obs}
		For any $n\geq 6$, we have $Z_{n-1}(G_n) = Z(G_n)$.
	\end{obs}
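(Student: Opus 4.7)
The plan is to show that in any play of the $Z_{n-1}$-game on $G_n$, the move $\CCR_{n-1}$ can never be legally invoked, so the game reduces to the ordinary $Z$-game, which gives the desired equality.

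The inequality $Z_{n-1}(G_n)\le Z(G_n)$ is immediate from the monotonicity $Z_0\le Z_1\le\cdots\le Z$ recorded right after the definition of the $Z_q$-game. For the reverse inequality, observe that $\CCR_{n-1}$ is legal only when the set of currently uncoloured vertices induces at least $n$ connected components (since $q+1=n$). So it suffices to prove that for every $B\subseteq V(G_n)$, the induced subgraph $G_n[V(G_n)\setminus B]$ has at most $n-1$ connected components.

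Let $k$ denote the number of components of $G_n[V(G_n)\setminus B]$. If $k\le 3$, then $k\le n-1$ since $n\ge 6$. If $k\ge 4$, Lemma~\ref{lem:kneser1} guarantees that every component is a single vertex, so $V(G_n)\setminus B$ is a coclique of $K(n,2)$ of size $k\ge 4$. Lemma~\ref{lem:kneser3} then places $V(G_n)\setminus B$ inside a maximum coclique of $K(n,2)$, which has size $n-1$; hence $k\le n-1$ in this case as well.

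In both cases $k<n$, so $\CCR_{n-1}$ is never available during play. Consequently, any winning strategy for the $Z_{n-1}$-game uses only token placements and the ordinary colour change rule, and is therefore automatically a winning strategy for the $Z$-game, yielding $Z(G_n)\le Z_{n-1}(G_n)$. I do not foresee any serious obstacle here; the argument reduces cleanly to the component bounds already established in Lemmas~\ref{lem:kneser1} and~\ref{lem:kneser3}, together with the fact that a maximum coclique in $K(n,2)$ has size $n-1$.
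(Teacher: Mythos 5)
Your proof is correct and follows essentially the same route as the paper: both arguments show that $\CCR_{n-1}$ can never be invoked because any collection of at least four uncoloured components must be a coclique (Lemma~\ref{lem:kneser1}), and cocliques in $K(n,2)$ have size at most $n-1$ by the EKR bound, so the game collapses to the ordinary $Z$-game. Your write-up is somewhat more explicit than the paper's (which cites EKR directly rather than Lemma~\ref{lem:kneser3}, and omits the $k\le 3$ case), but the underlying idea is identical.
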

	\begin{proof}
		This is a consequence of the celebrated Erd\"os-Ko-Rado (EKR) theorem (see \cite{GM}). First, note that we need to provide $n$ components to play the $Z_{n-1}$-game. By Lemma~\ref{lem:kneser1}, such components must be a coclique of $K(n,2)$. By the EKR theorem, this is however impossible. Therefore, the optimal strategy is to use this zero-forcing set.
	\end{proof}
	\begin{cor}
		For any $q\geq n-1$, we have $Z_q(G_n) = Z(G_n) = \binom{n}{2}-6$.
	\end{cor}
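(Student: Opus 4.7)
The plan is to deduce the corollary as an immediate squeeze from the preceding Observation together with the monotonicity of $Z_q$. Recall from the Introduction that the ``at least $q+1$'' wording in the definition of the $Z_q$-game yields the chain
\[
Z_0(G)\le Z_1(G)\le \cdots \le Z(G)
\]
for every graph $G$. Applied to $G=G_n=K(n,2)$, this gives, for any $q\ge n-1$,
\[
Z_{n-1}(G_n)\;\le\; Z_q(G_n)\;\le\; Z(G_n).
\]

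The preceding Observation establishes that the two endpoints of this chain coincide, namely $Z_{n-1}(G_n)=Z(G_n)$ for $n\ge 6$ (its proof uses the Erd\H{o}s--Ko--Rado theorem to rule out handing the oracle $n$ pairwise disjoint uncoloured components, forcing the player to rely on a genuine zero forcing set). Consequently every $Z_q(G_n)$ with $q\ge n-1$ is pinched to the same common value $Z(G_n)$.

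It only remains to identify $Z(G_n)$ with $\binom{n}{2}-6$. This is recorded in the Corollary of Bre\v{s}ar et al. quoted earlier in the section: for $n\ge 7$ one has $Z(K(n,2))=\binom{n}{2}-6$. Combining this with the display above completes the proof for $n\ge 7$.

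There is no real obstacle here; the only subtlety worth flagging is the range of $n$. The formula $Z(G_n)=\binom{n}{2}-6$ fails at $n=5,6$ (where the Bre\v{s}ar et al. Corollary records the exceptional values $5$ and $10$, respectively), so the clean identity stated in the corollary is implicitly restricted to $n\ge 7$; for $n\in\{5,6\}$ the equality $Z_q(G_n)=Z(G_n)$ still holds for $q\ge n-1$ by the same squeeze argument, but the common value must then be read off the tabulated exceptional cases rather than from the closed form.
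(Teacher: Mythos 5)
Your proposal is correct and follows exactly the reasoning the paper intends (the corollary is stated without proof there): the monotone chain $Z_{n-1}(G_n)\le Z_q(G_n)\le Z(G_n)$, pinched by the preceding Observation that $Z_{n-1}(G_n)=Z(G_n)$, together with the Bre\v{s}ar et al.\ value $Z(K(n,2))=\binom{n}{2}-6$. Your caveat that the closed form $\binom{n}{2}-6$ only holds for $n\ge 7$ (the exceptional values being $5$ and $10$ for $n=5,6$) is a fair observation about the implicit range of the statement rather than a gap in the argument.
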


\section*{Acknowledgments}
The work in this paper was a joint project of the
2022 Discrete Mathematics Research Group at the University of Regina, attended by
all of the authors. 
Dr.~Fallat's research was supported in part by NSERC Discovery Research Grant, Application No.: RGPIN-2019-03934. Dr.~Meagher's research was supported in part by an NSERC Discovery Research Grant, Application No.: RGPIN-03952-2018. 


\vspace{5mm}

%
%
\end{document}